\documentclass[11pt, letterpaper]{amsart}

%%%%%%%%%%%%%%%%%%%%%%%%%%%%%%%%%%%%%%%%%%%%%%%%%%%%%%%%%%%%%%%%%%%%%%%%%%%%%%%%%%%%%%%%%%%%%%%%%%%%%%%%%%%%%%%%%%%%%%%%%%%%%%%%%%%%%%%%%%%%%%%
% PACKAGES %%%%%%%%%%%%%%%%%%%%%%%%%%%%%%%%%%%%%%%%%%%%%%%%%%%%%%%%%%%%%%%%%%%%%%%%%%%%%%%%%%%%%%%%%%%%%%%%%%%%%%%%%%%%%%%%%%%%%%%%%%%%%%%%%%%%
%%%%%%%%%%%%%%%%%%%%%%%%%%%%%%%%%%%%%%%%%%%%%%%%%%%%%%%%%%%%%%%%%%%%%%%%%%%%%%%%%%%%%%%%%%%%%%%%%%%%%%%%%%%%%%%%%%%%%%%%%%%%%%%%%%%%%%%%%%%%%%%

% From Heis/Schr Definition
\usepackage{amssymb,enumitem,mleftright,parskip, tikz}
%\usepackage[margin = 1 in]{geometry}

% From Heis Pair 
\usepackage{enumitem,mleftright,parskip}
\usepackage[margin = 1 in]{geometry}
\usepackage{dsfont}
\usepackage{amsmath}
\usepackage{amsfonts}
\usepackage{amssymb}
\usepackage{bbm}
\usepackage{mathrsfs}
\usepackage{mathtools}
\usepackage{amsthm}
\usepackage{verbatim}
\usepackage{upgreek}
\usepackage{tikz-cd}
\usepackage{enumitem}
\usepackage{dsfont}

\usepackage[alphabetic]{amsrefs}

\usepackage[utf8]{inputenc}

%%%%%%%%%%%%%%%%%%%%%%%%%%%%%%%%%%%%%%%%%%%%%%%%%%%%%%%%%%%%%%%%%%%%%%%%%%%%%%%%%%%%%%%%%%%%%%%%%%%%%%%%%%%%%%%%%%%%%%%%%%%%%%%%%%%%%%%%%%%%%%%
% TYPOGRAPHICAL SETTINGS %%%%%%%%%%%%%%%%%%%%%%%%%%%%%%%%%%%%%%%%%%%%%%%%%%%%%%%%%%%%%%%%%%%%%%%%%%%%%%%%%%%%%%%%%%%%%%%%%%%%%%%%%%%%%%%%%%%%%%
%%%%%%%%%%%%%%%%%%%%%%%%%%%%%%%%%%%%%%%%%%%%%%%%%%%%%%%%%%%%%%%%%%%%%%%%%%%%%%%%%%%%%%%%%%%%%%%%%%%%%%%%%%%%%%%%%%%%%%%%%%%%%%%%%%%%%%%%%%%%%%%

\setlist[itemize]{leftmargin = *}
\setlist[enumerate]{leftmargin = *}

\frenchspacing

\allowdisplaybreaks

%%%%%%%%%%%%%%%%%%%%%%%%%%%%%%%%%%%%%%%%%%%%%%%%%%%%%%%%%%%%%%%%%%%%%%%%%%%%%%%%%%%%%%%%%%%%%%%%%%%%%%%%%%%%%%%%%%%%%%%%%%%%%%%%%%%%%%%%%%%%%%%
% THEOREM ENVIRONMENTS %%%%%%%%%%%%%%%%%%%%%%%%%%%%%%%%%%%%%%%%%%%%%%%%%%%%%%%%%%%%%%%%%%%%%%%%%%%%%%%%%%%%%%%%%%%%%%%%%%%%%%%%%%%%%%%%%%%%%%%%
%%%%%%%%%%%%%%%%%%%%%%%%%%%%%%%%%%%%%%%%%%%%%%%%%%%%%%%%%%%%%%%%%%%%%%%%%%%%%%%%%%%%%%%%%%%%%%%%%%%%%%%%%%%%%%%%%%%%%%%%%%%%%%%%%%%%%%%%%%%%%%%

\newtheorem{Prop}{Proposition}[section]
\newtheorem{Thm}[Prop]{Theorem}

\newtheorem{corollary}[Prop]{Corollary}

\theoremstyle{definition}

\newtheorem{Def}[Prop]{Definition}
\newtheorem*{remark}{Remark}

\newtheorem{example}[Prop]{Example}

%%%%%%%%%%%%%%%%%%%%%%%%%%%%%%%%%%%%%%%%%%%%%%%%%%%%%%%%%%%%%%%%%%%%%%%%%%%%%%%%%%%%%%%%%%%%%%%%%%%%%%%%%%%%%%%%%%%%%%%%%%%%%%%%%%%%%%%%%%%%%%%
% MACROS %%%%%%%%%%%%%%%%%%%%%%%%%%%%%%%%%%%%%%%%%%%%%%%%%%%%%%%%%%%%%%%%%%%%%%%%%%%%%%%%%%%%%%%%%%%%%%%%%%%%%%%%%%%%%%%%%%%%%%%%%%%%%%%%%%%%%%
%%%%%%%%%%%%%%%%%%%%%%%%%%%%%%%%%%%%%%%%%%%%%%%%%%%%%%%%%%%%%%%%%%%%%%%%%%%%%%%%%%%%%%%%%%%%%%%%%%%%%%%%%%%%%%%%%%%%%%%%%%%%%%%%%%%%%%%%%%%%%%%

\newcommand{\G}{\mathbb{G}}

\newcommand{\GG}{\mathbb{G}}
\newcommand{\HH}{\mathbb{H}}

\newcommand{\Br}[1]{\mleft( #1 \mright)}

\newcommand{\op}{\textnormal{op}}

\newcommand{\Comm}[1]{#1^{\c}}

\newcommand{\Trip}[3]{\Br{#1,#2,#3}}

\newcommand{\id}{\operatorname{id}}
\newcommand{\wh}{\widehat}

\newcommand{\mc}{\mathcal}

\newcommand{\vp}{\varphi}
\newcommand{\ov}{\overline}
\newcommand{\oon}{\operatorname}
\newcommand{\lin}{\oon{span}}

\newcommand{\la}{\langle}
\newcommand{\ra}{\rangle}

\renewcommand{\c}{\textnormal{c}}

\newcommand{\vv}{\mathrm{V}}
\newcommand{\ww}{\mathrm{W}}

\newcommand{\Ww}{\mathds{W}}
\newcommand{\wW}{\text{\reflectbox{$\Ww$}}\:\!}
\newcommand{\vV}{\text{\reflectbox{$\mathds{V}$}}\:\!}

%%From Lucas 

\newcommand{\cs}{\ensuremath{C^*}}
\newcommand{\bb}{\mathbb{B}}
\newcommand{\br}{\mathbb{R}}
\newcommand{\bc}{\mathbb{C}}
\newcommand{\bk}{\mathbb{K}}

\newcommand{\bZ}{\mathbb{Z}}

\newcommand{\bdops}{\bb}

\renewcommand{\id}{\text{id}}

\renewcommand{\ker}{\text{ker}}

\newcommand{\inv}{^{-1}} 

%\newcommand{\op}{^{\text{op}}}

%%%%%%%%%%%%%%%%%%%%%%%%%%%%%%%%%%%%%%%%%%%%%%%%%%%%%%%%%%%%%%%%%%%%%%%%%%%%%%%%%%%%%%%%%%%%%%%%%%%%%%%%%%%%%%%%%%%%%%%%%%%%%%%%%%%%%%%%%%%%%%%
% Editing Commands %%%%%%%%%%%%%%%%%%%%%%%%%%%%%%%%%%%%%%%%%%%%%%%%%%%%%%%%%%%%%%%%%%%%%%%%%%%%%%%%%%%%%%%%%%%%%%%%%%%%%%%%%%%%%%%%%%%%%%%%%%%%
%%%%%%%%%%%%%%%%%%%%%%%%%%%%%%%%%%%%%%%%%%%%%%%%%%%%%%%%%%%%%%%%%%%%%%%%%%%%%%%%%%%%%%%%%%%%%%%%%%%%%%%%%%%%%%%%%%%%%%%%%%%%%%%%%%%%%%%%%%%%%%%

\definecolor{magenta}{rgb}{0.9, 0, 0.9}
\definecolor{coral}{rgb}{1, 0.25, 0.25}
\definecolor{turquoise}{rgb}{0.25, 0.88, 0.82}

%%%%%%%%%%%%%%%%%%%%%%%%%%%%%%%%%%%%%%%%%%%%%%%%%%%%%%%%%%%%%%%%%%%%%%%%%%%%%%%%%%%%%%%%%%%%%%%%%%%%%%%%%%%%%%%%%%%%%%%%%%%%%%%%%%%%%%%%%%%%%%%
% BEGIN DOCUMENT %%%%%%%%%%%%%%%%%%%%%%%%%%%%%%%%%%%%%%%%%%%%%%%%%%%%%%%%%%%%%%%%%%%%%%%%%%%%%%%%%%%%%%%%%%%%%%%%%%%%%%%%%%%%%%%%%%%%%%%%%%%%%%
%%%%%%%%%%%%%%%%%%%%%%%%%%%%%%%%%%%%%%%%%%%%%%%%%%%%%%%%%%%%%%%%%%%%%%%%%%%%%%%%%%%%%%%%%%%%%%%%%%%%%%%%%%%%%%%%%%%%%%%%%%%%%%%%%%%%%%%%%%%%%%%

\title{The Covariant Stone--von Neumann Theorem\\ for Locally Compact Quantum Groups}

\author[Hall]{Lucas Hall}
\address{Department of Mathematics
\\Michigan State University
\\East Lansing, Michigan 48824
\newline
Department of Mathematics
\\ University of Haifa
\\ Haifa, Israel 3498838}
\email{hallluc1@msu.edu}

\author[Huang]{Leonard Huang}
\address{School of Physical and Mathematical Sciences \\ Nanyang Technological University \\ 21 Nanyang Link, Singapore 637371}
\email{leonard.huang@ntu.edu.sg}

\author[Krajczok]{Jacek Krajczok}
\address{Vrije Universiteit Brussel\\
Pleinlaan 2, 1050 Brussels, Belgium}
\email{jacek.krajczok@vub.be}

\author[Tobolski]{Mariusz Tobolski}
\address{Instytut Matematyczny, Uniwersytet Wroc\l{}awski, 
pl. Grunwaldzki 2/4, 50-384 Wroc\l{}aw, Poland
}
\email{mariusz.tobolski@math.uni.wroc.pl}

%\date{today}

\begin{document}

% ABSTRACT %%%%%%%%%%%%%%%%%%%%%%%%%%%%%%%%%%%%%%%%%%%%%%%%%%%%%%%%%%%%%%%%%%%%%%%%%%%%%%%%%%%%%%%%%%%%%%%%%%%%%%%%%%%%%%%%%%%%%%%%%%%%%%%%%%%%

\begin{abstract}

The Stone--von Neumann Theorem is a fundamental result 
which unified the competing quantum-mechanical models of 
matrix mechanics and wave mechanics. 
%It's mechanism of 
%proof ultimately involved the study of unitary group 
%representations on a Hilbert space. 
In this article, we 
continue the broad generalization set out by Huang and 
Ismert and by Hall, Huang, and Quigg, analyzing 
representations of locally compact quantum dynamical 
systems defined on Hilbert modules, of which the classical 
result is a special case. We introduce a pair of modular 
representations which subsume numerous models available
in the literature and, using the classical strategy of Rieffel,
 prove a Stone--von Neumann-type 
theorem for maximal actions of regular locally compact 
quantum groups on elementary C*-algebras. In particular, we generalize the  Mackey--Stone--von 
Neumann Theorem to regular locally compact 
quantum groups whose trivial actions on $\mathbb{C}$ are maximal and recover the multiplicity results of 
Hall, Huang, and Quigg. With this characterization in hand,
we prove our main result showing 
that if a dynamical system $(\mathbb{G},A,\alpha)$ satisfies the multiplicity assumption of 
the generalized Stone--von Neumann theorem, and if the coefficient algebra 
$A$ admits a faithful state, then the 
spectrum of the iterated crossed product 
$\widehat{\mathbb{G}}^{\rm op}\ltimes (\mathbb{G}\ltimes A)$ 
consists of a~single point. In the case of a separable coefficient algebra
or a regular acting quantum group, we further characterize features of this system, 
and thus obtain a~partial 
converse to the Stone--von Neumann theorem in the quantum 
group setting. As a~corollary, we show that a regular 
locally compact quantum group satisfies the generalized 
Stone--von Neumann theorem if and only if it is strongly regular.
\end{abstract} 

%%%%%%%%%%%%%%%%%%%%%%%%%%%%%%%%%%%%%%%%%%%%%%%%%%%%%%%%%%%%%%%%%%%%%%%%%%%%%%%%%%%%%%%%%%%%%%%%%%%%%%%%%%%%%%%%%%%%%%%%%%%%%%%%%%%%%%%%%%%%%%%

\maketitle

\section{Introduction} %%%%%%%%%%%%%%%%%%%%%%%%%%%%%%%%%%%%%%%%%%%%%%%%%%%%%%%%%%%%%%%%%%%%%%%%%%%%%%%%%%%%%%%%%%%%%%%%%%%%%%%%%%%%%%%%%%%%%%%%
%%%%%%%%%%%%%%%%%%%%%%%%%%%%%%%%%%%%%%%%%%%%%%%%%%%%%%%%%%%%%%%%%%%%%%%%%%%%%%%%%%%%%%%%%%%%%%%%%%%%%%%%%%%%%%%%%%%%%%%%%%%%%%%%%%%%%%%%%%%%%%%

The fields of science have long benefited from interdisciplinary interactions, and the application of mathematics to these fields is no exception.  With its foundation in mathematics and home in physics, the exploration of quantum mechanics has beyond any doubt cast us into the atomic era, and chief in the development of this knowledge were the independently developed models of quantum mechanics, namely Werner Heisenberg’s matrix mechanics and Erwin Schr\"odinger’s wave mechanics.  Where matrix mechanics proposed a governance of subatomic phenomena by certain interacting linear transformations, wave mechanics distinguished itself by the application of partial differential equations. Remarkably, each model yielded the same experimental predictions, suggesting a unity of theory despite cosmetic differences. Using sophisticated new tools developed by Marshall Stone (which today serve as important rudiments in the study of operator algebras), John von Neumann successfully proved the formal mathematical equivalence of these two models in what is now called the Stone--von Neumann Theorem. Loosely stated, the theorem proves that any self-adjoint representation of Heisenberg's linear operators is merely a multiple of the irreducible representation coming from solutions to Schr\"odinger's wave equation. The essential strategy involved associating to a coupled pair of linear transformations two families of linear operators indexed by the topological group $\br^n$. 

Von Neumann proceeded to study rings of operators, initiating the study of operator algebras which reaches as far in the field of mathematics as quantum mechanics does in physics. Indeed, operator algebraists have developed deep interactions with numerous ``classical'' mathematical fields, mirroring the drive to unify physics under one ``standard model''. Throughout this enrichment of the field, a steady interest in Stone and von Neumann's previous investigations remained, sustaining a perpetual refinement and generalization of ideas. Major influences include George Mackey and Marc Rieffel, with successive advancement from numerous others. Recently, Ismert and the second author provided a significant innovation, classifying representations of \cs-dynamical systems of abelian groups represented on Hilbert modules. Quigg and the first two authors expanded this classification to include dynamical systems of general (nonabelian) locally compact groups using divergent techniques from the parallel but distinct lenses of duality for \cs-actions and coactions. 

From the perspective of nonabelian duality, the divergence of technique is a necessary cost for the expansion beyond actions of an abelian group. However, recent decades have offered the appealing new theory of locally compact quantum groups to unify these parallel developments under a~common theory. In this article, we call upon this theory to integrate the work of Quigg and the first two authors, recovering numerous independent results inspired by Stone and von Neumann's century-old work, and offering a new analysis of spectral phenomena which may be regarded as a foundation for the multiplicity results which appear in the literature. Figuring prominently in this article is the emphasis on representations of locally compact quantum groups, which are a valuable tool for quantum groups which appears implicitly in the coaction setting of \cite{modular}. These objects, other conventions for locally compact quantum groups, and their crossed products are recorded in \S \ref{prelim}, together with a new result of general interest to the quantum group community: preservation of the full crossed products of universal quantum groups under duality, which implies under a regularity assumption maximality of the trivial actions. Heisenberg representations of \cs-quantum group dynamical systems are defined in \S \ref{hrep}, where we also explore close relationships with prior evolutionary expressions. Together with these relationships, Theorem \ref{covcorr} serves to confirm the proposed definition. In \S \ref{sec:main}, for a given dynamical system we introduce the  Schr\"odinger representation, and show that it is a prototype for the Heisenberg representation. For dynamical systems comprising maximal actions and elementary coefficient algebras, we deduce that every Heisenberg representation is a multiple of the Schr\"odinger representation, and with this characterization in hand, we prove our main result: given a dynamical system $(\GG, A, \alpha)$ where $\GG$ is a locally compact quantum group and $A$ admits a faithful state, if every Heisenberg representation is a multiple of the Schr\"odinger representation, then the spectrum of the \cs-algebra $\widehat{\GG}^\op\ltimes (\GG\ltimes A)$ is a one point space. This result is new already for the case of $A=\bc$.  In this setting, we conclude that (1) if $A$ is separable, then $A$ was an elementary \cs-algebra all along; and (2) if $\GG$ is a regular locally compact group, then the action is maximal. As a final corollary, we show that if $\GG$ is regular, the generalized Mackey--Stone--von Neumann Theorem holds for $(\GG, \bc, \text{triv})$ if and only if $\GG$ is strongly regular (equivalently: the trivial action of $\mathbb{G}$ on $\mathbb{C}$ is maximal).

% Finally, to further justify our assumptions, we investigate the conditions under which one expects a Stone--von Neumann-type result to hold. In our second main result, we prove that if a~\cs-quantum group dynamical system $(\GG,A,\alpha)$, where $A$ admits a faithful state, satisfies the generalized Stone--von Neumann theorem, then the spectrum of the iterated crossed product $\widehat{\GG}^{\rm op}\ltimes(\GG\ltimes A)$ consists of a single point. In this case, if one assumes that $A$ is separable, we infer that it has to be elementary. Much in the same way, if $\GG$ is regular, the action $\alpha$ has to be maximal. As a corollary,  that if the generalized Stone--von Neumann--Mackey theorem holds $\GG$ a regular locally compact quantum group, then its dual $\widehat{\GG}$ is strongly regular. Therefore, we obtain a novel result already in the case $A=\mathbb{C}$ which can be used to identify strongly regular locally compact quantum groups.

\subsection*{Acknowledgments}
The authors are deeply indebted to the referees, whose commentary vastly improved the exposition of this article. 

The work of JK was partially supported by  FWO grant 1246624N
as well as EPSRC grants EP/T03064X/1 and EP/T030992/1. 

LHall recognizes the Zuckerman Institute, who supports him as a Zuckerman Postdoctoral Scholar in the 2023-2024 cohort. 

MT was partially supported by NCN grant 2020/36/C/ST1/00082.

\subsection*{Data Availability Statement} 
Data sharing is not applicable to this article as no new data were created or analysed in this study.

\subsection*{Conflict of interest}
On behalf of all authors, the corresponding author states that there is no conflict of interest.

% PRELIMINARIES %%%%%%%%%%%%%%%%%%%%%%%%%%%%%%%%%%%%%%%%%%%%%%%%%%%%%%%%%%%%%%%%%%%%%%%%%%%%%%%%%%%%%%%%%%%%%%%%%%%%%%%%%%%%%%%%%%%%%%%%%%%%%%%

\section{Preliminaries}\label{prelim}

In this section we record the conventions we will use throughout the article. All homomorphisms are assumed to be $*$-preserving. An elementary \cs-algebra is a \cs-algebra $A$ which is isomorphic to the \cs-algebra of compact operators over some Hilbert space $H$. 

\subsection{Hilbert Modules}
We refer to \cites{lance, rw, buss} for details on Hilbert modules. Let $A$ be a \cs-algebra. A (right) \emph{Hilbert A-module} is a complex vector space $X$, denoted $X_A$ when confusion seems possible, which is also a right $A$-module equipped with an $A$-valued inner product $\langle \cdot, \cdot \rangle_A\colon  X\times X\to A$ satisfying various compatibility conditions. The inner product induces a natural norm on $X$, $\|x\| = \|\langle x,x\rangle_A \|_A^{1/2}$, and $X$ is required to be complete with respect to this norm; $X$ is called (right) \emph{full} provided that the ideal $\text{span}\{\langle y, x \rangle\,|\, y,x\in X\}$ is dense in $A$. Standard examples include any \cs-algebra $A$, which is a Hilbert $A$-module with inner product $\langle b, a\rangle_A = b^*a$; and Hilbert $\bc$-modules, which are Hilbert spaces with the convention that the ordinary inner product be conjugate-linear in the first variable. 

The set of \emph{adjointable operators} on $X$ is defined as
\[\mc{L}(X) = \{T\colon X\to X| \text{ there is } T^*\colon X\to X \text{ such that }\langle T^*\eta, \xi \rangle_A = \langle \eta, T\xi \rangle_A \text{ for all }\xi,\eta\in X\}.\] 
$\mc{L}(X)$ is a \cs-algebra which generalizes bounded Hilbert-space operators. When $X=A$ is a \cs-algebra, we denote $\mc{L}(A) = M(A)$ and call $M(A)$ the \emph{multiplier algebra} of $A$ --- it is the universal unital \cs-algebra which contains $A$ as an essential ideal. A $B-A$ \emph{correspondence} is a Hilbert $A$-module $X$ together with a nondegenerate homomorphism $B\to \mc{L}(X),$ by which we mean that $\ov{\oon{span}}\{b\cdot x|~ b\in B \text{ and } x\in X\}=X$ (or equivalently $\{b\cdot x|~ b\in B \text{ and } x\in X\}=X$ by the Cohen-Hewitt Factorization Theorem \cite{hewitt}). A \emph{$B-A$ imprimitivity bimodule} is a $B-A$ correspondence $X$ equipped with a $B$-valued inner product $\langle\cdot, \cdot \rangle_B\colon X\times X\to B$ which is left full and compatible with the $A$-valued inner product in the sense that $\langle \zeta,\eta\rangle_B\cdot \xi = \zeta\cdot \langle \eta, \xi\rangle_A$. For us, there are two imprimitivity bimodules of fundamental importance. The first is a Hilbert space $H$, which is a $\bk(H)-\bc$ imprimitivity bimodule. The left action is given by the identity map, and the left-inner-product structure is determined by the rule $\langle z, y \rangle_{\bk(H)} = \theta_{z, y}$, where $\theta _{z,y}$ is the standard rank-one operator $\theta_{z,y}(x) = z\langle y, x \rangle_\bc$ --- compatibility of the inner products is by definition. The other natural imprimitivity bimodule is given by a \cs-algebra $A$, which is an $A-A$ imprimitivity bimodule. The left action is simply left multiplication, and the left-inner-product structure is given by $\langle c, b\rangle = cb^*,$ thus $\langle c, b\rangle a = cb^*a = c\langle b, a\rangle.$ Fullness follows from an approximate-identity argument.  

In \cite{modular}, Quigg and the first two authors completely classified the correspondences over elementary \cs-algebras. In the theorem below, one correspondence $Y$ is a \emph{multiple} of another $X$ provided $Y$ is unitarily equivalent to a direct sum $\oplus_{S} X$. 

\begin{Thm}\cite{modular}*{Theorem 3.5}\label{modularsvn}
Let $A$ be an elementary \cs-algebra and $X$ an $B-A$ imprimitivity bimodule. Then every $B-A$ correspondence $Y$ is multiple of $X$.  
\end{Thm}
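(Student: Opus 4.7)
The plan is to leverage the Morita equivalence between $A = \mathbb{K}(H)$ and $\mathbb{C}$, witnessed by $H$ as an $A-\mathbb{C}$ imprimitivity bimodule. Under this equivalence, the category of right Hilbert $A$-modules is equivalent to the category of Hilbert spaces (right Hilbert $\mathbb{C}$-modules), with the inverse functor sending a Hilbert space $K$ to the Hilbert space tensor product $H \otimes K$, equipped with $A$ acting on the first factor. Consequently, every right Hilbert $A$-module is unitarily equivalent to $H \otimes K$ for a Hilbert space $K$ uniquely determined up to dimension.

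First I would apply this classification to obtain $X \cong H \otimes K_X$ and $Y \cong H \otimes K_Y$ as right Hilbert $A$-modules. A direct computation with the rank-one operators $\theta_{\xi_1 \otimes \eta_1,\, \xi_2 \otimes \eta_2}$ shows that $\mathbb{K}(H \otimes K)$ identifies naturally with $\mathrm{id}_H \otimes \mathbb{K}(K) \cong \mathbb{K}(K)$, and hence $\mathcal{L}(H \otimes K) \cong \mathbb{B}(K)$ by passing to multipliers.

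Since $X$ is a $B-A$ imprimitivity bimodule, the left $B$-action identifies $B$ with $\mathbb{K}(X) \cong \mathbb{K}(K_X)$, so $B$ is itself elementary. The left $B$-action on $Y$ then becomes a nondegenerate $*$-homomorphism $\pi\colon \mathbb{K}(K_X) \to \mathcal{L}(Y) \cong \mathbb{B}(K_Y)$, that is, a nondegenerate representation of the elementary \cs-algebra $\mathbb{K}(K_X)$ on the Hilbert space $K_Y$. By the classical uniqueness (up to multiplicity) of the irreducible representation of $\mathbb{K}(K_X)$, there exist a Hilbert space $L$ and a unitary identification $K_Y \cong K_X \otimes L$ under which $\pi(S) = S \otimes \mathrm{id}_L$.

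Combining these identifications yields $Y \cong H \otimes K_X \otimes L \cong X \otimes L \cong \bigoplus_e X$ (summed over an orthonormal basis of $L$) as $B-A$ correspondences, proving the claim. The main conceptual ingredient is the classification of Hilbert modules over an elementary \cs-algebra; the only mild obstacle is tracking conventions (sides of actions, conjugate linearity) and verifying that the successive identifications intertwine the $B$-actions. Everything then reduces to the uniqueness of the irreducible representation of $\mathbb{K}(H)$.
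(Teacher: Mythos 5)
The paper imports this result from \cite{modular} without reproducing a proof, so there is no internal argument to compare against; your proposal is correct and is essentially the standard (and the cited source's) argument: use the Morita equivalence between $\mathbb{K}(H)$ and $\mathbb{C}$ to classify right Hilbert $A$-modules by a Hilbert-space parameter, observe that $B\cong\mathbb{K}(X)\cong\mathbb{K}(K_X)$ is elementary, and reduce the comparison of $X$ and $Y$ to the uniqueness up to multiplicity of the nondegenerate representations of $\mathbb{K}(K_X)$ on $K_Y$. The only point to repair is a convention slip you already flag: $H$ is a \emph{left} $\mathbb{K}(H)$-module, so the classification of right Hilbert $\mathbb{K}(H)$-modules should be phrased with the conjugate module $\overline{H}$ (equivalently via the internal tensor product with the dual imprimitivity bimodule) rather than ``$H\otimes K$ with $A$ acting on the first factor''; this does not affect the structure of the argument.
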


We assume familiarity with the spatial tensor product of \cs-algebras which will simply be denoted by $\otimes$. Given Hilbert modules $X_A$ and $Y_B$, one can define an external tensor product $X\otimes Y$, which is a Hilbert $A\otimes B$-module and contains the algebraic tensor product of $X$ and $Y$ as a dense subspace. Given $T\in \mc{L}(X)$ and $S\in \mc{L}(Y)$, then there is $T\otimes S\in \mc{L}(X\otimes Y)$ determined on elementary tensors in the obvious way. We will make extensive use of leg-numbering notation, which we briefly describe. For $T\in \mc{L}(X\otimes X),$ $T_{ij}\in \mc{L}(X\otimes X\otimes X)$ are defined for $1\leq i<j\leq 3$ by $T_{12} = T\otimes 1$, $T_{23} = 1\otimes T$, and $T_{13} = (1\otimes \Sigma)T_{12}(1\otimes \Sigma)$, where $\Sigma\colon x\otimes y\mapsto y\otimes x$ is the flip operator on $X\otimes X$. For the flip map on the tensor product of two algebras $A\otimes B$, we will use symbol $\chi$.

\subsection{The Millinery of Locally Compact Quantum Groups}

Our main references for locally compact quantum groups are \cites{kv, buss}. Other standard references include \cites{kustermansuniversal, venchilada, KVvn}. 

A \emph{locally compact quantum group} $\GG$ consists of a $C^*$-algebra $C_0(\GG)$ together with a nondegenerate $*$-homomorphism $\Delta\colon C_0(\GG)\to M(C_0(\GG)\otimes C_0(\GG))$, called comultiplication which satisfies the coassociativity $(\Delta\otimes \id)\Delta=(\id\otimes \Delta)\Delta$ and density conditions
\[\begin{split}
C_0(\GG)&=\ov{\lin}\{(\omega\otimes\id)\Delta(x) |~ \omega\in C_0(\GG)^*,x\in C_0(\GG)\}\\
&=\ov{\lin}\{(\id\otimes \omega)\Delta(x) |~ \omega\in C_0(\GG)^*,x\in C_0(\GG)\}.
\end{split}\]
Furthermore, by definition, there are two faithful, approximately KMS weights $\vp,\psi$ on $C_0(\GG)$ which are respectively left- and right-invariant 
(for details see \cite[Definition 4.1]{kv}). One can equivalently work in the von Neumann-algebraic setting 
(\cite[Definition 1.1]{KVvn}), but we will stay in the language of noncommutative topology, i.e., \cs-algebras.

Associated with the left Haar weight $\vp$ is the GNS Hilbert space, denoted  $L^2(\GG)$, and faithful representation $\pi_\vp\colon C_0(\GG)\to \bdops(L^2(\GG))$. Since $\pi_\vp$ is faithful, it is customary to consider $C_0(\GG)$ as acting on $L^2(\GG)$ and neglect writing $\pi_\vp$, and we will follow this custom. One defines a unitary $\ww^{\GG}\in \bdops(L^2(\GG)\otimes L^2(\GG))$ called the Kac-Takesaki operator. It is a multiplicative unitary, i.e., it satisfies the Pentagonal Relation
$$
\ww^{\GG}_{12}\ww^{\GG}_{13}\ww^{\GG}_{23}=\ww^{\GG}_{23}\ww^{\GG}_{12}.
$$
From $\ww^{\GG}$, one recovers the $C^*$-algebra $C_0(\GG) = \ov{\lin}\{(\id\otimes \omega)(\ww^{\GG})|~ \omega\in \bdops(L^2(\GG))_* \}$ and comultiplication $\Delta(a) = \ww^{\GG *}(1\otimes a)\ww^{\GG}\;(a\in C_0(\GG))$. One defines the \emph{dual} locally compact quantum group $\wh{\GG}$ --- its $C^*$-algebra may be recognized as $C_0(\widehat{\GG})=\ov{\lin}\{(\omega\otimes\id)(\ww^{\GG})|~\omega\in \bdops(L^2(\GG))_*\}$ and the comultiplication is given by $\widehat{\Delta}(a)= \ww^{\wh\GG *}(1\otimes a)\ww^{\wh\GG}\;(a\in C_0(\wh{\GG}))$, where $\ww^{\wh\GG} = \chi ( \ww^{\GG *})$. In fact, $\ww^{\wh\GG}$ is the Kac-Takesaki operator of $\wh{\GG}$. The remaining features of $\widehat{\GG}$ will be likewise decorated with hats. One can prove that in fact $\ww^{\GG}\in M(C_0(\GG)\otimes C_0(\wh\GG))$ and
\[
(\Delta\otimes \id)\ww^{\GG}=\ww^{\GG}_{13}\ww^{\GG}_{23},\qquad
(\id\otimes \wh\Delta)\ww^{\GG}=
\ww^{\GG}_{13} \ww^{\GG}_{12}.
\]
The culmination of the theory is the recovery of generalized Pontryagin duality $\widehat{\widehat{\GG}}\cong\GG$.
 
Every locally compact group $G$ gives rise to two quantum groups. The first one is $\GG$, with associated \cs-algebra $C_0(\GG)=C_0(G)$, comultiplication acting by $\Delta(f)(x,y)=f(xy)$ and weights $\vp,\psi$ given by integration with respect to left (resp.~right) Haar measure. One typically identifies $\GG$ with $G$. The second one, the dual quantum group, is described by objects studied in abstract harmonic analysis: $C_0(\wh{G})=C_r^*(G)$ is the reduced group \cs-algebra and comultiplication acts on generators via $\wh{\Delta}(\lambda(g))=\lambda(g)\otimes \lambda(g)$. Both Haar weights of $\wh{\GG}$ are equal to the Plancherel weight. Nonclassical analogs include compact quantum groups (which entails $C_0(\GG)$ possess a unit) and discrete quantum groups (where $C_0(\GG)$ is a direct sum of matrix algebras). 

Out of a quantum group $\GG$, one builds yet another \cs-algebra, $C_0^u(\GG)$, which should be thought of as a universal version of $C_0(\GG)$. It comes with its own comultiplication: a non-degenerate $*$-homomorphism $\Delta^u\colon C_0^u(\GG)\rightarrow M(C_0^u(\GG)\otimes C_0^u(\GG))$ and the \emph{reducing map}, which is a $*$-epimorphism $\lambda\colon C^u_0(\GG)\rightarrow C_0(\GG)$ respecting comultiplications (i.e. $\Delta\circ \lambda = (\lambda\otimes \lambda) \circ \Delta^u$). Furthermore, it has a very useful universal property concerning representations of $\wh\GG$ which we will encounter in section \ref{sec:corep} (see also \cite{kustermansuniversal}). 

Let $J$ be the modular conjugation associated to the left Haar weight. The \emph{commutant of $\GG$} is a locally compact quantum group $\Comm{\GG}$ with \cs-algebra $C_0(\Comm{\GG})=J C_0(\GG)J$ and comultiplication $\Comm{\Delta}(a)=(J\otimes J) \Delta(JaJ) (J\otimes J)$ for $a\in C_0(\Comm{\GG})$. Another locally compact quantum group associated with $\GG$ is the \emph{opposite} of $\GG$ with $C_0(\GG^{\op})=C_0(\GG)$ and $\Delta^\op = \chi\circ \Delta$. This millinery of examples are related by various relations, namely 
\[\widehat{\GG^\op}= \widehat{\GG}^\c \qquad \widehat{\Comm{\GG}}=\widehat{\GG}^\op \qquad \GG^{\op, \c}=\GG^{\c,\op}\cong_{\widehat{J}J} \GG;\]
The final relation states that the opposite of the commutant of $\GG$ is unitarily equivalent to $\GG$ by the unitary $\widehat{J}J$ (see \cite[Section 4]{KVvn}). Each of the quantum groups we consider above admits universal companions, and we will decorate the respective reducing maps using the same notation, e.g. $\hat{\lambda}\colon C_0^u(\widehat{\GG})\to C_0(\widehat{\GG})$, and similarly with $\lambda^\op$ and $\lambda^c$. 

The last definition introduced here is the notion of regularity. By definition, a locally compact quantum group $\GG$ is \emph{regular} if $\ov{\oon{span}}\{ (\id\otimes \omega)(\Sigma \ww^{\GG}) |~ \omega\in \bdops(L^2(\GG))_*\}=\bk(L^2(\GG))$. One can show that $\GG$ is regular if and only if $\wh\GG$ is regular. Furthermore, all compact or discrete quantum groups are regular as well as quantum groups with trivial scaling group (see \cites{BaajSkandalis,venchilada} and references therein).

\subsection{Representations} \label{sec:corep}

Given a locally compact quantum group $\GG$ and a Hilbert $A$-module $X$, a~\emph{left representation} of $\GG$ (or a~{\em left corepresentation} of $C_0(\mathbb{G})$) on $X$ is a unitary $u\in \mc{L}(C_0(\GG)\otimes X)$ which satisfies $(\Delta\otimes \id)(u) = u_{13}u_{23}$. Likewise, a \emph{right representation} is a unitary $v\in \mc{L}(X\otimes C_0(\GG))$ which satisfies $(\id\otimes \Delta)(v) = v_{12}v_{13}$. The assignment $u\mapsto u^\op = \chi(u)^*$ determines a bijective correspondence between 
left representations of $\GG$ and right representations of $\GG^\op$. The Kac-Takesaki operator is a distinguished left representation of $\GG$ on $L^2(\GG)$. In fact, $\ww^{\GG}\in M(C_0(\GG)\otimes C_0(\widehat{\GG}))$, and 
regarded this way we call $\ww^{\GG}$ the \emph{left regular representation} of $\GG$ (on $C_0(\widehat{\GG})$). 
We also have the right regular representation of $\GG$ (which is indeed a right representation corresponding to a~particular left representation of $\GG^\op$), 
which is a unitary $\vv^{\GG}\in M(C_0(\widehat{\GG}^\c)\otimes C_0(\GG))$ satisfying $(\id\otimes \Delta)(\vv^{\GG}) = \vv^{\GG}_{12}\vv^{\GG}_{13}$. It also implements the comultiplication, namely $\Delta(x)=\vv^{\GG} (x\otimes 1) \vv^{\GG *}$ for $x\in C_0(\GG)$.

There is a universal (left) representation of $\GG$, written 
${\wW}^{\GG}\in M(C_0(\GG)\otimes C_0^u(\widehat{\GG}))$ and satisfying the relation 
$(\Delta\otimes\id)({\wW}^{\GG})={\wW}^{\GG}_{13}{\wW}^{\GG}_{23}$. 
The comultiplication $\widehat{\Delta}_u$ on $C_0^u(\widehat{\GG})$ is defined exactly so that $(\id\otimes \widehat{\Delta}_u)({\wW}^{\GG}) = {\wW}^{\GG}_{13}{\wW}^{\GG}_{12}$. The right leg of ${\wW}^{\GG}$ generates $C_0^u(\wh\GG)$ in the sense that
$C_0^u(\wh\GG)=\ov{\lin} \{ (\omega\otimes \id){\wW}^{\GG}\,|\, \omega\in \bdops( L^2(\GG))_* \}$. Moreover, the universal dual $C_0^u(\widehat{\GG})$ encodes the representation theory of $\GG$ in the sense that any left representation $u\in \mc{L}(C_0(\GG)\otimes X)$ corresponds to exactly one nondegenerate representation $\mu\colon C_0^u(\widehat{\GG})\to \mc{L}(X)$, characterized by $(\id\otimes \mu)({\wW}^{\GG}) = u$ (\cite[Proposition 2.14]{kustermansuniversal}). Similar statements hold for the universal (right) representation of $\GG$, $\mathds{V}^{\GG}\in M(C_0^u(\widehat{\GG^\op})\otimes C_0(\GG))=M(C_0^u(\widehat{\GG}^\c)\otimes C_0(\GG))$. All of this applies equally well to $\widehat{\GG}$, and we will in particular encounter the right universal representation of $\widehat{\GG}$, written $\mathds{V}^{\widehat{\GG}}\in M(C_0^u(\GG^c)\otimes C_0(\widehat{\GG}))$.\\

\subsection{Crossed Products} 

Let $A$ be a \cs-algebra and $\GG$ a locally compact quantum group. A \emph{left action} of $\GG$ on $A$  is a nondegenerate homomorphism $\alpha\colon A\to M(C_0(\GG)\otimes A)$ which satisfies the following two conditions 
\begin{align*}
 (\id\otimes\alpha)\circ \alpha = (\Delta\otimes \id)\circ \alpha\quad \text{ and }\quad
\overline{\text{span}}\,(C_0(\GG) \otimes 1 )\alpha(A) = C_0(\GG)\otimes A.
 \end{align*}
 The second condition is variously referred to as \emph{continuity, coaction nondegeneracy,} or the \emph{Podleś condition} in the literature. The data $(\GG, A, \alpha)$ is also called %a \emph{left coaction}, or otherwise 
a \emph{left dynamical system}. One analogously defines a \emph{right action} of $\GG$ on $A$ by swapping the positions of $A$ and $\GG$ in the codomain of $\alpha$; as above, left actions of $\GG$ on $A$ correspond to right actions of $\GG^\op$ on $A$. The triple $(A,\GG,\alpha)$ denotes a right dynamical system. Notice that the comultiplication of a locally compact quantum group is an example of a left (and right) action of $\GG$ on itself. Another example is given by the \emph{(left) trivial action} of $\GG$ on $A$, denoted $(\GG, A, \text{triv})$. Here, $\text{triv}(a) = 1\otimes a$.

 %\lucas{Are there any other important coactions we should include here?} 

% \begin{remark}
%In the classical setting of locally compact groups, right actions appear as the standard choice. In the quantum setting, however, left representations and consequently left actions appear to be more natural. Of course, in view of the comments above, the two notions are really more or less equivalent --- the burden of working with right actions is purely notational. Nevertheless, in this article, we will have use for both left and right actions. 
% \end{remark}

Given a left dynamical system $(\GG, A ,\alpha)$ and any Hilbert $B$-module $X = X_B$, a \emph{covariant representation} of $(\GG, A, \alpha)$ on $X_B$ is a pair of nondegenerate homomorphisms $(\mu,\pi)$, with $\mu\colon C_0^u(\widehat{\GG})\to \mc{L}(X_B)$ and $\pi\colon A\to \mc{L}(X_B)$, subject to the condition that 
\[(\id \otimes \pi)\circ\alpha(a) = (\id \otimes \mu)(\wW^\GG)^*(1 \otimes \pi(a)) (\id\otimes \mu)(\wW^\GG)\]
for all $a\in A$. Notice that the right hand side is conjugation by the left unitary representation $(\id\otimes \mu)(\wW^\GG)$. Elsewhere in the literature, a covariant representation is regarded as a pair comprising a homomorphism of the coefficient algebra and a left representation on the common Hilbert module $X$. Given a right dynamical system $(A, \GG, \alpha)$, a covariant representation on $X=X_B$ is a~pair $(\pi, m)$ with $\pi\colon A\to \mc{L}(X_B)$ and $m\colon C_0^u(\widehat{\GG}^\c)\to \mc{L}(X_B)$ subject to 
\[(\pi\otimes \id)\circ \alpha(a) = (m\otimes \id)(\mathds{V}^\GG)(\pi(a)\otimes 1)(m\otimes \id)(\mathds{V}^\GG)^*\]
An important example of a covariant representation for $(\GG, A, \alpha)$ is given on the Hilbert module $L^2(\GG)\otimes A$ by the pair $( \hat{\lambda} \otimes 1, \alpha)$ -- it is called the \emph{the regular representation} of the dynamical system.

For the left dynamical system $(\GG, A, \alpha)$, the \emph{full crossed product} is a \cs-algebra $\GG\ltimes A$ together with a covariant representation 
 \[
j_\GG\colon C_0^u(\widehat{\GG})\to M(\GG\ltimes A),\qquad j_A\colon A\to M(\GG\ltimes A) 
 \]
 which is universal in the following sense: whenever $(\mu,\pi)$ is a covariant representation of $(\GG, A, \alpha)$ on $X$, there is a unique nondegenerate homomorphism $\mu\ltimes \pi\colon  \GG\ltimes A\to \mc{L}(X)$, called the \emph{integrated form} (of the covariant representation), so that 
 \[\mu=(\mu\ltimes \pi)\circ j_\GG,\qquad
 \pi=(\mu\ltimes \pi)\circ j_A.
 \]
 On account of the above identifications, one may also \emph{disintegrate} an integrated form $\mu\ltimes \pi$ to recover the original covariant representation by precomposing by the universal covariant representations. The full crossed product exists, is unique up to isomorphism, and moreover 
 (see \cite{venchilada,vergnioux})
 \begin{equation}\label{crospan}
\GG\ltimes A=\overline{\rm span}\{j_A(a)j_\GG(h)~|~a\in A,h\in C_0^u(\widehat{\GG})\}.
\end{equation}
For example, the crossed product for the trivial action $(\GG, A, \text{triv})$ is the maximal tensor product $C_0^u(\wh{\GG})\otimes_{\textnormal{max}} A$. For other fundamental examples, one constructs the full crossed product $\GG\ltimes C_0(\GG)$ for the dynamical system $(\GG, C_0(\GG), \Delta_\GG)$, and the reduced crossed product $\GG\ltimes_r C_0(\GG) = (\hat{\lambda}\otimes 1\ltimes \Delta_{\GG}) (\GG\ltimes C_0(\GG))$. A locally compact quantum group $\GG$ is regular if and only if $\GG\ltimes_r C_0(\GG)\cong \bk(L^2(\GG))$. Similarly, $\GG$ is called \emph{strongly regular} provided that $\GG\ltimes C_0(\GG)\cong \bk(L^2(\GG))$ (\cite[Definition 2.11]{venchilada}). We remark that we obtain the same property, if we assume that these isomorphisms are implemented by canonical maps, or just abstract.

Let $(\GG,A,\alpha)$ be again an arbitrary left dynamical system. There is a left action $\wh{\alpha}^{\rm op}$ of $\wh{\GG}^{\rm op}$ on $\GG\ltimes A$, called the \emph{dual action to $\alpha$}, uniquely characterized by
\begin{equation}\label{dc}
\wh{\alpha}^{\rm op}(j_\GG(h))=(\hat{\lambda}\otimes j_\GG)\Delta^u_{\wh{\GG}^{\rm op}}(h),\quad h\in C^u_0(\wh{\GG}),
\qquad 
\wh{\alpha}^{\rm op}(j_A(a))=1\otimes j_A(a),\quad a\in A.
\end{equation}
Indeed, one may check that $((\hat{\lambda}\otimes j_{\GG})\Delta_{\wh{\GG}^{\rm op}}^u,1\otimes j_A)$ forms a covariant representation of $(\GG,A,\alpha)$ on $C_0(\wh{\GG}^{\rm op})\otimes (\GG\ltimes A)$. 
%Clearly both $(\lambda_{\wh{\GG}}\otimes j_{\GG})\Delta_{\wh{\GG}^{\rm op}}^u$ and $1\otimes j_A$ are non-degenerate $*$-homomorphisms from $C_0^u(\wh{\GG}^{\rm op})$ (resp.~$A$) to $M\bigl( C_0(\wh{\GG}^{\rm op})\otimes (\GG\ltimes A)\bigr)$. Take $a\in A$. We have
% \[\begin{split}
% &\quad\;
% (\id\otimes (\lambda_{\wh\GG}\otimes j_{\GG})\Delta_{\wh{\GG}^{\rm op}}^u)(\wW^{\GG})^* 
% (1\otimes 1\otimes j_A(a))
% (\id\otimes (\lambda_{\wh\GG}\otimes j_{\GG})\Delta_{\wh{\GG}^{\rm op}}^u)(\wW^{\GG})\\
% &=
% ( W^{\GG}_{12}(\id\otimes j_{\GG})(\wW^{\GG})_{13} )^* (1\otimes 1\otimes j_A(a))
% (  W^{\GG}_{12}(\id\otimes j_{\GG})(\wW^{\GG})_{13})\\
% &=
% (\id\otimes j_{\GG})(\wW^{\GG})^*_{13}
% (1\otimes 1\otimes j_A(a))
% (\id\otimes j_{\GG})(\wW^{\GG})_{13}\\
% &=
% (\id\otimes j_A)\alpha(a)_{13}=
% (\id\otimes (1\otimes j_A))\alpha(a),
% \end{split}\]
Then, by the universal property of $\GG\ltimes A$, we obtain a non-degenerate $*$-homomorphism $\wh{\alpha}^{\rm op}=(\hat{\lambda}\otimes j_{\GG})\Delta^u_{\wh{\GG}^{\rm op}}\ltimes (1\otimes j_A)\colon \GG\ltimes A\rightarrow M(C_0(\wh{\GG}^{\rm op})\otimes (\GG\ltimes A))$ acting via \eqref{dc}.

For a dynamical system $(\GG, A, \alpha)$ with $\GG$ regular, there is a canonical surjection $\wh{\GG}^{\rm op}\ltimes (\GG\ltimes A)\rightarrow \bk(L^2(\GG))\otimes A,$ which is given by the integrated form of the covariant representation $(\lambda^c\otimes 1, (\hat{\lambda}\otimes 1) \ltimes \alpha)$; the action $\alpha$ is called \emph{maximal} provided this surjection is an isomorphism~\cite[Definition~2.13]{venchilada}. 

%\begin{remark}
We are now prepared to present a result of general interest to the field of locally compact quantum groups. 
\begin{Prop}\label{maxtrivdual}
Let $\GG$ be a locally compact quantum group. We have the following isomorphisms\footnote{We are grateful to Stefaan Vaes for pointing us towards the proof of isomorphism $\GG\ltimes C_0^u(\GG)\cong \GG\ltimes C_0(\GG)$.}:
\begin{enumerate}
\item $\GG\ltimes C_0^u(\GG)\cong \widehat{\GG}\ltimes C_0^u(\widehat{\GG})$,
\item $ \GG^{\op}\ltimes C_0^u(\GG^{\op})\cong (\GG\ltimes C_0^u(\GG))^{\op},$
\item $\GG\ltimes C_0^u(\GG)\cong \GG\ltimes C_0(\GG)$. 
\end{enumerate}
Assume that $\GG$ (equivalently: $\GG^{\op}$ or $\wh{\GG}$) is regular. As a consequence we conclude that the following properties are equivalent:
\begin{itemize}
\item $\GG$ is strongly regular,
\item $\GG^{\op}$ is strongly regular,
\item $\wh{\GG}$ is strongly regular,
\item trivial action of $\GG$ on $\mathbb{C}$ is maximal.
\end{itemize}
\end{Prop}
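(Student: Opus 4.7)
The plan is to establish the three isomorphisms (1)--(3) and then deduce the four equivalences by formal manipulations, together with the elementary fact that $\bk(H)^{\op}\cong \bk(H)$ via transposition.

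For (3), I would show that the reducing map $\lambda\colon C_0^u(\GG)\to C_0(\GG)$ is equivariant between the canonical actions $(\lambda\otimes\id)\Delta^u$ and $\Delta$, since $\Delta\circ \lambda=(\lambda\otimes\lambda)\Delta^u$; functoriality of the full crossed product then produces a surjection $\GG\ltimes C_0^u(\GG)\twoheadrightarrow \GG\ltimes C_0(\GG)$. Injectivity is the technical heart and amounts to showing that in any covariant representation $(\mu,\pi)$ of the dynamical system $(\GG, C_0^u(\GG), (\lambda\otimes\id)\Delta^u)$, the $*$-homomorphism $\pi$ factors through $\lambda$; this is the step attributed to Vaes in the footnote and I expect it to be the hardest ingredient.

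For (2), the structural identities $C_0^u(\GG^{\op})=C_0^u(\GG)$ and $\Delta^u_{\GG^{\op}}=\chi\circ\Delta^u$ mean the $\GG^{\op}$-system differs from the $\GG$-system only by the reversed order of multiplication built into the opposite algebra. I would send the spanning element $j_{A}(a)j_{\GG^{\op}}(h)$ of $\GG^{\op}\ltimes C_0^u(\GG^{\op})$ appearing in \eqref{crospan} to $j_\GG(h)j_A(a)\in (\GG\ltimes C_0^u(\GG))^{\op}$ and verify that this assignment is compatible with the universal property on both sides. For (1), the key observation is that a covariant representation of $(\GG, C_0^u(\GG), (\lambda\otimes\id)\Delta^u)$ on a module $X$ is exactly the data of a representation of $\GG$ (a non-degenerate $*$-homomorphism out of $C_0^u(\widehat{\GG})$) together with a representation of $\widehat{\GG}$ (a non-degenerate $*$-homomorphism out of $C_0^u(\GG)$) on $X$, subject to a Heisenberg-type commutation relation implemented by the Kac--Takesaki operator of $\GG$. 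Because the Kac--Takesaki operator of $\widehat{\GG}$ is the flip $\chi(\ww^{\GG *})$ of that of $\GG$, a relabeling yields a canonical bijection between such data for $(\GG, C_0^u(\GG))$ and for $(\widehat{\GG}, C_0^u(\widehat{\GG}))$, and the universal property delivers the desired isomorphism.

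For the equivalences, (3) identifies each of the properties ``$\GG$ strongly regular'', ``$\GG^{\op}$ strongly regular'', and ``$\widehat{\GG}$ strongly regular'' with the isomorphism of $\GG\ltimes C_0^u(\GG)$, $\GG^{\op}\ltimes C_0^u(\GG^{\op})$, respectively $\widehat{\GG}\ltimes C_0^u(\widehat{\GG})$ onto $\bk(L^2(\GG))$. Isomorphism (1) then gives the equivalence $\GG\Leftrightarrow\widehat{\GG}$, while (2) combined with $\bk(H)^{\op}\cong\bk(H)$ gives $\GG\Leftrightarrow\GG^{\op}$. Finally, using $\GG\ltimes\mathbb{C}\cong C_0^u(\widehat{\GG})$ together with \eqref{dc} to identify the dual of the trivial action as $(\widehat{\lambda}\otimes\id)\Delta^u_{\widehat{\GG}^{\op}}$, maximality of the trivial action is equivalent to $\widehat{\GG}^{\op}\ltimes C_0^u(\widehat{\GG}^{\op})\cong \bk(L^2(\GG))$, which by (3) applied to $\widehat{\GG}^{\op}$ is exactly strong regularity of $\widehat{\GG}^{\op}$, hence equivalent to strong regularity of $\GG$ by what has already been shown. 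The sole serious obstacle is injectivity in (3); everything else is essentially formal.
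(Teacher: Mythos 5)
Your overall architecture coincides with the paper's: both directions of (1) obtained from the universal property and composed to the identity on generators, functoriality plus a factorization claim for (3), an order-reversing identification for (2), and the same formal chain for the four equivalences (using $\bk(H)^{\op}\cong\bk(H)$, the identification $\GG\ltimes\bc=C_0^u(\wh{\GG})$ with dual action the canonical $\wh{\GG}^{\op}$-action, and isomorphisms (1)--(3)). That closing chain is fine once (1)--(3) are in place. The problem is that the proposal leaves unproved exactly the steps that carry the mathematical content. For (3) you correctly reduce injectivity to the claim that in every covariant representation $(\mu,\pi)$ of $(\GG,C_0^u(\GG),(\lambda\otimes\id)\Delta^u)$ the homomorphism $\pi$ factors through $\lambda$, but you then label it the hardest ingredient and stop. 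The paper actually proves it: for $y^u=(\id\otimes\wh{\omega})(\Ww^{\GG})$ one combines the covariance relation with the identity $\bigl((\lambda\otimes\id)\Delta^u\otimes\id\bigr)(\Ww^{\GG})=\vV^{\GG}_{12}\,\ww^{\GG}_{13}\,\vV^{\GG *}_{12}$ to obtain
\begin{equation*}
(\id\otimes\mu)(\wW^{\GG})^{*}\,(1\otimes\pi(y^u))\,(\id\otimes\mu)(\wW^{\GG})=(\id\otimes\pi)(\vV^{\GG})\,(\lambda(y^u)\otimes 1)\,(\id\otimes\pi)(\vV^{\GG})^{*},
\end{equation*}
whence $\|\pi(y^u)\|=\|\lambda(y^u)\|$ and $\pi$ annihilates $\ker\lambda$. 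Without this (or an equivalent) use of the right regular representation, the injectivity of the canonical surjection $\GG\ltimes C_0^u(\GG)\twoheadrightarrow\GG\ltimes C_0(\GG)$ simply does not follow, so this is a genuine gap rather than a routine omission.

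There are two further soft spots. For (1), the assertion that a covariant representation of $(\GG,C_0^u(\GG),(\lambda\otimes\id)\Delta^u)$ is ``exactly'' one of $(\wh{\GG},C_0^u(\wh{\GG}),(\wh{\lambda}\otimes\id)\wh{\Delta}^u)$ with the two homomorphisms exchanged is the theorem, not a relabeling: the two covariance conditions are written against the distinct semi-universal unitaries $\wW^{\GG}\in M(C_0(\GG)\otimes C_0^u(\wh{\GG}))$ and $\wW^{\wh{\GG}}\in M(C_0(\wh{\GG})\otimes C_0^u(\GG))$, which are not related merely by flip-and-adjoint, and the paper verifies the equivalence by a several-line leg computation using the pentagon-type relations. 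For (2), the map $j_A(a)j_{\GG^{\op}}(h)\mapsto j_{\GG}(h)j_A(a)$ is not defined as stated, since $h$ lives in $C_0^u(\wh{\GG^{\op}})=C_0^u(\wh{\GG}^{\c})$ rather than in $C_0^u(\wh{\GG})$; one needs the universal unitary antipode $C_0^u(\wh{\GG})\to C_0^u(\wh{\GG}^{\c})$ together with the canonical antimultiplicative map $A\to A^{\op}$, exactly as the paper indicates, and well-definedness must be established through covariant representations rather than by prescribing values on products.
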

Let us remark that each of the above isomorphisms is implemented by natural maps.

\begin{proof}
Let $j_{\GG}$ and $j_{C_0^u(\GG)}$ denote the canonical maps for the full crossed product $\GG\ltimes C_0^u(\GG)$, and let $j_{\widehat{\GG}}$ and $j_{C_0^u(\widehat{\GG})}$ be the analogous maps for $\widehat{\GG}\ltimes C_0^u(\widehat{\GG})$. We claim that $(j_{C_0^u(\widehat{\GG})},j_{\widehat{\GG}})$ is a covariant representation of the dynamical system $(\GG, C_0^u(\GG), (\lambda\otimes \id)\Delta^u)$ on $M(\widehat{\GG}\ltimes C_0^u(\widehat{\GG}) )$. Indeed, since $(j_{\widehat{\GG}},j_{C_0^u(\widehat{\GG})})$ forms a covariant representation of $(\widehat{\GG},C_0^u(\widehat{\GG}),(\widehat{\lambda}\otimes{\rm id})\widehat{\Delta}^u)$, for $x^u=(\omega\otimes\id)(\wW^{\widehat{\GG}})$, $\omega\in L^1(\widehat{\GG})$, we have that
\[\begin{split}
&\quad\;
(\id\otimes j_{C_0^u(\widehat{\GG})})(\wW^{\GG})^*
(1\otimes j_{\widehat{\GG}}(x^u))
(\id\otimes j_{C_0^u(\widehat{\GG})})(\wW^{\GG})\\
&=
(\omega\otimes\id\otimes\id)\bigl(
(\id\otimes j_{C_0^u(\widehat{\GG})})(\wW^{\GG})^*_{23}
(\id\otimes j_{\widehat{\GG}})(\wW^{\widehat{\GG}})_{13}
(\id\otimes j_{C_0^u(\widehat{\GG})})(\wW^{\GG})_{23}\bigr)\\
&=
(\id\otimes \omega\otimes\id)\bigl(
(\id\otimes j_{\widehat{\GG}})(\wW^{\widehat{\GG}})_{23}(\id\otimes j_{\widehat{\GG}})(\wW^{\widehat{\GG}})_{23}^*
(\id\otimes j_{C_0^u(\widehat{\GG})})(\wW^{\GG})^*_{13}
(\id\otimes j_{\widehat{\GG}})(\wW^{\widehat{\GG}})_{23}\\
&\quad\quad\quad\quad\quad\quad
\quad\quad\quad\quad\quad\quad
\quad\quad\quad\quad\quad\quad
\quad\quad\quad\quad\quad\quad
\quad\quad\quad\quad\quad\quad
(\id\otimes j_{C_0^u(\widehat{\GG})})(\wW^{\GG})_{13}\bigr)\\
&=
(\id\otimes \omega\otimes\id)\bigl(
(\id\otimes j_{\widehat{\GG}})(\wW^{\widehat{\GG}})_{23}
(\id\otimes\id\otimes j_{C_0^u(\widehat{\GG})})
(\id\otimes (\widehat{\lambda}\otimes \id)\widehat{\Delta}^u)(\wW^{\GG *})
(\id\otimes j_{C_0^u(\widehat{\GG})})(\wW^{\GG})_{13}\bigr)\\
&=
(\id\otimes \omega\otimes\id)\bigl(
(\id\otimes j_{\widehat{\GG}})(\wW^{\widehat{\GG}})_{23}
\ww^{\GG *}_{12}
(\id\otimes j_{C_0^u(\widehat{\GG})})(\wW^{\GG *})_{13}
(\id\otimes j_{C_0^u(\widehat{\GG})})(\wW^{\GG})_{13}\bigr)\\
&=
(\id\otimes \omega\otimes\id)\bigl(
(\id\otimes j_{\widehat{\GG}})(\wW^{\widehat{\GG}})_{23}
\ww^{\GG *}_{12}\bigr)=
(\omega\otimes \id\otimes\id)\bigl(
(\id\otimes j_{\widehat{\GG}})(\wW^{\widehat{\GG}})_{13}
\ww^{\widehat{\GG}}_{12}\bigr)\\
&=
(\omega\otimes \id\otimes\id)\bigl(
(\id\otimes \id\otimes j_{\widehat{\GG}})
(\id\otimes (\lambda\otimes\id)\Delta^u)(\wW^{\widehat{\GG}})\bigr)=
( \id\otimes j_{\widehat{\GG}})
((\lambda\otimes\id)\Delta^u(x^u)),
\end{split}\]
which, by continuity of the maps involved, proves the claim. Hence, there is a unique non-degenerate $*$-homomorphism $j_{C_0^u(\widehat{\GG})}\ltimes j_{\widehat{\GG}}$: $\GG\ltimes C_0^u(\GG)\rightarrow M( \widehat{\GG}\ltimes C_0^u(\widehat{\GG}))$ such that
\begin{equation}\label{eq101}
(j_{C_0^u(\widehat{\GG})}\ltimes j_{\widehat{\GG}})j_{\GG}=
j_{C_0^u(\widehat{\GG})},\quad
(j_{C_0^u(\widehat{\GG})}\ltimes j_{\widehat{\GG}})j_{C_0^u(\GG)}=
j_{\widehat{\GG}}.
\end{equation}
By duality, we obtain a non-degenerate $*$-homomorphism $j_{C_0^u(\GG)}\ltimes j_{\GG}\colon \widehat{\GG}\ltimes C_0^u(\widehat{\GG})\rightarrow M( \GG\ltimes C_0^u(\GG))$ such that
\begin{equation}\label{eq102}
(j_{C_0^u(\GG)}\ltimes j_{\GG})j_{\widehat{\GG}}=
j_{C_0^u(\GG)},\quad
(j_{C_0^u(\GG)}\ltimes j_{\GG})j_{C_0^u(\widehat{\GG})}=
j_{\GG}.
\end{equation}
Putting together \eqref{eq101} and \eqref{eq102}, we get
\[
(j_{C_0^u(\GG)}\ltimes j_{\GG})
(j_{C_0^u(\widehat{\GG})}\ltimes j_{\widehat{\GG}})j_{\GG}=
(j_{C_0^u(\GG)}\ltimes j_{\GG})j_{C_0^u(\widehat{\GG})}=j_{\GG}
\]
and
\[
(j_{C_0^u(\GG)}\ltimes j_{\GG})
(j_{C_0^u(\widehat{\GG})}\ltimes j_{\widehat{\GG}})j_{C_0^u(\GG)}=
(j_{C_0^u(\GG)}\ltimes j_{\GG})j_{\widehat{\GG}}=
j_{C_0^u(\GG)},
\]
which shows that $j_{C_0^u(\widehat{\GG})}\ltimes j_{\widehat{\GG}}$ is an isomorphism with inverse $j_{C_0^u(\GG)}\ltimes j_{\GG}$. This shows the first claimed isomorphim.

Isomorphism $(2)$ can be proved by a similar technique as above, combining the canonical maps into crossed product, universal property of full crossed product together with canonical antimultiplicative maps: unitary antipode, $C_0^u(\HH)\rightarrow C_0^u(\HH^{c})$ and  $A\rightarrow A^{\op}$ (for LCQGs $\HH$ and $C^*$-algebras $A$).

To see isomorphism $(3)$, let us first make an auxiliary observation. Consider arbitrary representation (i.e.~a non-degenerate $*$-homomorphism) $\GG\ltimes C_0^u(\GG)\rightarrow \bb(H)$; it is given by integration $\mu\ltimes \pi$ of a covariant representation $(\mu,\pi)$ of $(\GG,C_0^u(\GG),(\lambda\otimes\id)\Delta^u)$ on $H$. The covariance condition reads
\begin{equation}\label{eq5}
(\id\otimes \mu)(\wW^{\GG})^* (1\otimes \pi(y^u))
(\id\otimes \mu)(\wW^{\GG})=
(\id\otimes \pi)\circ (\lambda\otimes \id)\Delta^u(y^u)
\end{equation}
for $y^u\in C_0^u(\GG)$. In particular, for $y^u=(\id\otimes\wh{\omega})\Ww^{\GG}$ with $\wh{\omega}\in L^1(\wh{\GG})$ we obtain
\[\begin{split}
&\quad\;
(\id\otimes \mu)(\wW^{\GG})^*
(1\otimes \pi(y^u))
(\id\otimes \mu)(\wW^{\GG})=
(\id\otimes\id\otimes\wh\omega)\bigl(
(\lambda\otimes\pi\otimes\id)(\Delta^u\otimes \id)(\Ww^{\GG})\bigr)\\
&=
(\id\otimes\id\otimes\wh\omega)\bigl(
(\id\otimes\pi\otimes\id)
(
{\vV}^{\GG}_{12} \ww^{\GG}_{13} \vV^{\GG *}_{12}
)\bigr)=
(\id\otimes\id\otimes\wh\omega)\bigl(
(\id\otimes\pi)(\vV^{\GG})_{12} \ww^{\GG}_{13} 
(\id\otimes\pi)(\vV^{\GG })^*_{12}\bigr)\\
&=
(\id\otimes\pi)(\vV^{\GG})
((\id\otimes\wh{\omega})(\ww^{\GG})\otimes 1)
(\id\otimes\pi)(\vV^{\GG })^*=
(\id\otimes\pi)(\vV^{\GG})
(\lambda(y^u)\otimes 1)
(\id\otimes\pi)(\vV^{\GG })^*.
\end{split}\]
By continuity, the above equation holds for all $y^u\in C_0^u(\GG)$. Consequently, we see that $\pi$ vanishes on the kernel of $\lambda$ and there is a non-degenerate $*$-homomorphism $\widetilde{\pi}\colon C_0(\GG)\rightarrow \bb(H)$ such that $\pi=\widetilde{\pi}\circ\lambda$. One immediately sees from \eqref{eq5} that $(\mu,\widetilde{\pi})$ forms a covariant representation of $(\GG,C_0(\GG),\Delta)$, hence we obtain non-degenerate $*$-homomorphism $\mu\ltimes \widetilde{\pi}\colon \GG\ltimes C_0(\GG)\rightarrow \bb(H)$ such that $(\mu\ltimes \widetilde{\pi})(j_{\GG}^r\ltimes j_{C_0(\GG)}\lambda)=\mu\ltimes \pi$, where $j_{\GG}^r,j_{C_0(\GG)}$ are the canonical maps for the crossed product $\GG\ltimes C_0(\GG)$.

We can apply this reasoning to $(\mu,\pi)=(j_{\GG},j_{C_0^u(\GG)})$. Then we obtain map $j_{\GG}\ltimes \widetilde{j}_{C_0^u(\GG)}\colon\GG\ltimes C_0(\GG)\rightarrow \GG\ltimes C_0^u(\GG)$ satisfying
\[
(j_{\GG}\ltimes \widetilde{j}_{C_0^u(\GG)})(j_{\GG}^r\ltimes j_{C_0(\GG)}\lambda)=j_{\GG}\ltimes j_{C_0^u(\GG)}=\id.
\]
This prove that the (clearly surjective) non-degenerate $*$-homomorphism $j_{\GG}^r\ltimes j_{C_0(\GG)}\lambda\colon \GG\ltimes C_0^u(\GG)\rightarrow \GG\ltimes C_0(\GG)$ is in fact an isomorphism.

Now, let $\GG$ be a regular locally compact quantum group. Assume that the trivial action $\bc\rightarrow M(C_0(\GG)\otimes \bc)$ is maximal, i.e.~that the canonical surjection $\widehat{\GG}^{\rm op}\ltimes (\GG\ltimes \bc)\rightarrow \bk(L^2(\GG))$ is an isomorphism. Using isomorphisms proved above we have the following:
\[\begin{split}
\bk(L^2(\GG))&\cong \widehat{\GG}^{\rm op}\ltimes (\GG\ltimes \bc)=
\widehat{\GG}^{\rm op}\ltimes C_0^u(\widehat{\GG})\cong
\widehat{\GG}^{\op}\ltimes C_0^u(\widehat{\GG}^{\op})\cong
(\GG\ltimes C_0^u(\GG))^{\op}\cong
(\GG\ltimes C_0(\GG))^{\op}.
\end{split}\]
Since $\bk(L^2(\GG))^{\op}\cong \bk(L^2(\GG))$, it shows that $\GG$ is strongly regular. This reasoning can easily be reversed, hence we see that if $\GG$ is strongly regular, then $\GG\curvearrowright \bc$ is maximal. Equivalence of these properties with strong regularity of $\wh{\GG}$ follows from isomorphism $(1)$, and with strong regularity of $\GG^{\op}$ from $(2)$.
\end{proof}

\subsection{The Mackey--Stone--von Neumann Theorem for Quantum Groups}
Various authors call the Stone-von Neumann theorem the isomorphism $G\ltimes C_0(\wh{G})\cong\mathbb{K}(L^2(G))$ (see~\cite[Theorem~C.34]{raeburnwilliams} and~\cite[Theorem~3.4]{huneufangruan} for a more general result in the quantum group setting) which is a crucial ingredient in Rieffel's proof of Mackey's version of the theorem. As far as the authors know, there is no Mackey--Stone--von Neumann theorem for quantum groups. In what follows, we formulate and prove a more general result relevant to Hilbert modules over any elementary \cs-algebra. Here, with the technology described above in hand, we discuss known related results. In particular, we highlight some parallels between our work, \cite{RT18}, and \cite{Rieffel}. The authors of the former article consider quantum group-twisted tensor product \cs-algebras in the settings of maximal tensor products, building on the work in the minimal setting of \cite{mrw}. Their gadget subsumes maximal tensor products, skew-commutative tensor products of $\bZ/2\bZ$-graded \cs-algebras, and the construction of importance to this discussion, universal crossed products. The details we need are found in \cite{RT18}*{Section 6.2}. 

Consider the dynamical system $(\widehat{\GG}^\op, C_0^u(\widehat{\GG}), (\hat{\lambda}\otimes \id)\circ\Delta^u_{\widehat{\GG}^\op})$ and a covariant representation $(\mu, \pi)$ for it on a Hilbert $\bc$-module $X$ (a Hilbert space representation). In the proof of \cite{RT18}*{Theorem 6.2}, the authors show (up to chirality of left-right representations) that this is equivalent to the pair furnishing a $\ww^\GG$-commutative representation, which entails commutation of the two representations modulo some interference with respect to $\ww^\GG$; c.f. our Definition~\ref{heirep} and Proposition~\ref{covcorr}. Now, if the trivial action of $\GG$ on $\bc$ is maximal (so that $\wh{\GG}^{\op}\ltimes C_0^u(\wh{\GG})\cong \bk(L^2(\GG))$), 
%then $\widehat{\GG}$ is strongly regular, and 
then the integrated form $\mu\ltimes\pi$ implements a representation of the elementary \cs-algebra $\bk(L^2(\GG))$. Since all irreducible representations of $\bk(L^2(\GG))$ are unitarily equivalent to the identity representation, it follows that $\mu\ltimes\pi$ is a multiple of the identity representation. Upon disintegration of any irreducible subrepresentation, one may reinterpret the covariant subrepresentation as a $\ww^\GG$-commutative representation of it's own. This is essentially the strategy Rieffel used in \cite{Rieffel} to prove his version of the Mackey-Stone-von Neumann Theorem, and the principle behind our Theorem \ref{main}. Thus, maximality of the trivial action is a necessary condition. In Corollary~\ref{fullconverse}, we show that it is also sufficient.

%, which is relevant to Hilbert modules over any elementary \cs-algebra.
%Fix a quantum group $\GG$, suppose $X$ is a Hilbert $\bc$-module (a Hilbert space),and that we have both a left representation $u\in \mc{L}(C_0(\GG)\otimes X)$ of $\GG$ and a right representation $v\in\mc{L}(C_0(\widehat{\GG})\otimes X)$. 

%\end{remark}

%Let us end this section with the following definition\footnote{{\color{violet}Do we want to describe this homomorphism?}}: Then a left coaction $\alpha$ is said to be \emph{maximal} if the canonical $\star$-epimorphism $\wh{\GG}^{\rm op}\ltimes (\GG\ltimes A)\rightarrow \mc{K}(L^2(\GG))\otimes A$ is an isomorphism.

\section{Heisenberg Representations}\label{hrep}

Modular representations for group actions on operator algebras already appear in \cite{HI} and \cite{modular}, emphasizing pointwise notions of covariant representations. Representations of actions lack such a pointwise notions, and require a distinct but parallel theory, which is more appropriate from the lens of locally compact quantum groups. Below, we offer a definition of Heisenberg representations which extends the acceptable dynamics and unifies the classical setting under the common theory of representations. 

\begin{Def}\label{heirep}
Let $(\mathbb{G},A,\alpha)$ be a left dynamical system. A {\em Heisenberg representation} $(X,\rho^c,\widehat{\rho},\pi)$ of $(\GG,A,\alpha)$ consists of a Hilbert $A$-module $X$ and non-degenerate $*$-homomorphisms
\[
\rho^c\colon C_0^u(\mathbb{G}^c)\longrightarrow\mathcal{L}(X),\qquad \widehat{\rho}\colon C_0^u(\widehat{\mathbb{G}})\longrightarrow \mathcal{L}(X),\qquad \pi\colon A\longrightarrow \mathcal{L}(X),
\]
such that
\begin{enumerate}
\item[(R1)] $(\wh{\rho},\pi)$ is a covariant representation of $(\GG,A,\alpha)$ on $X$,
\item[(R2)] $\ww_{13}^\GG({\rm id}\otimes\widehat{\rho}\,)(\wW^\GG)_{12}(\rho^c\otimes{\rm id})(\mathds{V}^{\widehat{\GG}})_{23}=(\rho^c\otimes{\rm id})(\mathds{V}^{\widehat{\GG}})_{23}({\rm id}\otimes\widehat{\rho}\,)(\wW^\mathbb{G})_{12}$,
\item[(R3)] The images of $\pi$ and $\rho^c$ commute.
\end{enumerate}
\end{Def}

One certainly wishes to translate between the definition offered above and prior literary iterants. The next two propositions accomplish this, followed by a brief investigation of the relationships with this article's predecessors. 

\begin{Prop}
Let $\GG$ be a locally compact quantum group, let $X$ be a right Hilbert module over a C*-algebra $A$, and let
\[
\rho^c\colon C_0^u(\mathbb{G}^c)\longrightarrow\mathcal{L}(X),\qquad \widehat{\rho}\colon C_0^u(\widehat{\mathbb{G}})\longrightarrow \mathcal{L}(X),
\]
be non-degenerate $*$-homomorphisms. The following are equivalent:
\begin{enumerate}
\item[{\rm(R2)}] $\ww_{13}^\GG({\rm id}\otimes\widehat{\rho}\,)(\wW^\GG)_{12}(\rho^c\otimes{\rm id})(\mathds{V}^{\widehat{\GG}})_{23}=(\rho^c\otimes{\rm id})(\mathds{V}^{\widehat{\GG}})_{23}({\rm id}\otimes\widehat{\rho}\,)(\wW^\mathbb{G})_{12}$.
\item[{\rm(R2')}] $(\widehat{\rho},\rho^c)$ is a covariant representation of %the right dynamical system 
$(C^u_0(\widehat{\GG}),\widehat{\GG},({\rm id}\otimes\hat{\lambda})\Delta_{\wh{\GG}}^u)$ on $X$.
\end{enumerate}
\end{Prop}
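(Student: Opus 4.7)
The approach is to unfold the covariance condition (R2'), test it on the norm-dense generating family $\{(\omega\otimes\id)(\wW^{\GG})\mid \omega\in L^{1}(\GG)\}\subseteq C_{0}^{u}(\wh{\GG})$, and use the two identities
\[
(\id\otimes\Delta^{u}_{\wh{\GG}})(\wW^{\GG})=\wW^{\GG}_{13}\wW^{\GG}_{12},\qquad (\id\otimes\wh{\lambda})(\wW^{\GG})=\ww^{\GG}
\]
recorded in the preliminaries to recast the covariance as a three-leg identity in $M(C_{0}(\GG)\otimes\mc{L}(X)\otimes C_{0}(\wh{\GG}))$. Once we are there, stripping off the separating family of slices $(\omega\otimes\id\otimes\id)$ should yield precisely (R2).

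Concretely, first I would check that $\alpha := (\id\otimes\wh{\lambda})\Delta^{u}_{\wh{\GG}}\colon C_{0}^{u}(\wh{\GG})\to M(C_{0}^{u}(\wh{\GG})\otimes C_{0}(\wh{\GG}))$ is a right action of $\wh{\GG}$ on $C_{0}^{u}(\wh{\GG})$, so that, using the definition of covariant representation for a right dynamical system recalled in the preliminaries and the implementing right universal representation $\mathds{V}^{\wh{\GG}}\in M(C_{0}^{u}(\GG^{c})\otimes C_{0}(\wh{\GG}))$, condition (R2') literally asserts
\[
(\wh{\rho}\otimes\id)\alpha(y)=V(\wh{\rho}(y)\otimes 1)V^{*}\qquad (y\in C_{0}^{u}(\wh{\GG})),
\]
with $V:=(\rho^{c}\otimes\id)(\mathds{V}^{\wh{\GG}})\in\mc{L}(X)\otimes C_{0}(\wh{\GG})$. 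Setting $y=(\omega\otimes\id)(\wW^{\GG})$ and applying the two displayed identities, a short bookkeeping calculation rewrites the left-hand side as $(\omega\otimes\id\otimes\id)(\ww^{\GG}_{13}U_{12})$ with $U:=(\id\otimes\wh{\rho})(\wW^{\GG})$, and the right-hand side as $(\omega\otimes\id\otimes\id)(V_{23}U_{12}V_{23}^{*})$, the latter because $V_{23}$ is constant in the first leg. Since $L^{1}(\GG)$ separates points of $M(C_{0}(\GG)\otimes K)$ for any C*-algebra $K$, the equality of these slices for all $\omega$ is equivalent to the three-leg identity
\[
\ww^{\GG}_{13}U_{12}=V_{23}U_{12}V_{23}^{*},
\]
which upon right-multiplication by $V_{23}$ rearranges to exactly (R2). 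The reverse implication is obtained by reading the argument backwards and using continuity of $\wh{\rho}$ and $\alpha$ to pass from the dense generating family back to all of $C_{0}^{u}(\wh{\GG})$.

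The main (minor) obstacle is the leg-bookkeeping in the mixed tensor product: one must recognize that $\wW^{\GG}_{13}$ has its universal third leg converted into the reduced leg $\ww^{\GG}_{13}$ by $\id\otimes\id\otimes\wh{\lambda}$, while $\wW^{\GG}_{12}$ has trivial third leg and is unaffected, and that $V_{23}$ commutes with the slice $(\omega\otimes\id\otimes\id)$ on the first leg. No deeper ingredient appears to be needed.
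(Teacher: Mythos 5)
Your argument is correct and is essentially the paper's own proof: both rest on the identity $(\id\otimes\Delta^u_{\widehat{\GG}})(\wW^\GG)=\wW^\GG_{13}\wW^\GG_{12}$, the fact that $\id\otimes\id\otimes\hat{\lambda}$ turns the third leg of $\wW^\GG_{13}$ into $\ww^\GG_{13}$, slicing by $\omega\otimes\id\otimes\id$, and density of the elements $(\omega\otimes\id)(\wW^\GG)$ in $C_0^u(\widehat{\GG})$ (the paper runs the computation from (R2) to (R2') and reverses it, while you start from (R2'), but the content is identical).
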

\begin{proof}
To prove that (R2) implies (R2'), observe that
\begin{align*}
({\rm id}\otimes\widehat{\rho}\otimes\hat{\lambda})({\rm id}\otimes\Delta^u_{\widehat{\GG}})(\wW^\GG)&=\ww_{13}^\GG({\rm id}\otimes\widehat{\rho}\,)(\wW^\GG)_{12}\\
&=(\rho^c\otimes{\rm id})(\mathds{V}^{\widehat{\GG}})_{23}({\rm id}\otimes\widehat{\rho}\,)(\wW^\mathbb{G})_{12}(\rho^c\otimes{\rm id})(\mathds{V}^{\widehat{\GG}})_{23}^*\,.
\end{align*}
Take $\omega\in \bdops(L^2(\GG))_*$. After applying $(\omega\otimes{\rm id}\otimes{\rm id})$ to both sides of the above equation, we obtain
\[
(\widehat{\rho}\otimes\hat{\lambda})\Delta^u_{\widehat{\GG}}((\omega\otimes{\rm id})(\wW^\GG))=(\rho^c\otimes{\rm id})(\mathds{V}^{\widehat{\GG}})(\widehat{\rho}((\omega\otimes{\rm id})(\wW^\GG))\otimes 1)(\rho^c\otimes{\rm id})(\mathds{V}^{\widehat{\GG}})^*\,.
\]
Since the elements $(\omega\otimes{\rm id})(\wW^\GG)$ span a dense subspace of $C^u_0(\widehat{\GG})$, the conclusion follows. The preceding equation defines a covariant representation of $(\widehat{\rho}, \rho^c)$, and the above computation can be easily reversed to yield the remaining implication.
\end{proof}

\begin{Prop}
Let $\GG$ be a locally compact quantum group, let $X$ be a right Hilbert module over a C*-algebra $A$, and let
\[
\pi\colon A\longrightarrow\mathcal{L}(X),\qquad
\rho^c\colon C_0^u(\mathbb{G}^c)\longrightarrow\mathcal{L}(X),
\]
be non-degenerate $*$-homomorphisms. The following are equivalent:
\begin{enumerate}
\item[{\rm (R3)}] The images of $\pi$ and $\rho^c$ commute.
\item[{\rm (R3')}] $(\pi, \rho^c)$ is a covariant representation of the right dynamical system $(A,\widehat{\GG},{\rm triv})$ on $X$.
\end{enumerate}
\end{Prop}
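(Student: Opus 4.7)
The plan is to unfold the covariance condition for right dynamical systems when the action is trivial, reduce (R3') to a single operator-level commutation relation, and then translate to the pointwise statement (R3) via the standard slicing argument for universal representations of quantum groups.

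More concretely, the trivial right action $\text{triv}\colon A \to M(A \otimes C_0(\widehat{\GG}))$ sends $a$ to $a \otimes 1$, so $(\pi \otimes \id)\circ\text{triv}(a) = \pi(a) \otimes 1$, and the right-covariance condition of the preliminaries, applied with $m = \rho^c$ and $\GG$ replaced by $\widehat{\GG}$, reads
\[\pi(a) \otimes 1 = (\rho^c \otimes \id)(\mathds{V}^{\widehat{\GG}})(\pi(a) \otimes 1)(\rho^c \otimes \id)(\mathds{V}^{\widehat{\GG}})^*\]
for every $a \in A$. Equivalently, $(\rho^c \otimes \id)(\mathds{V}^{\widehat{\GG}})$ commutes with $\pi(a) \otimes 1$ in $\mc{L}(X \otimes C_0(\widehat{\GG}))$. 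The proposition therefore reduces to showing that this operator-level commutation on $X \otimes C_0(\widehat{\GG})$ is equivalent to pointwise commutation of the images of $\rho^c$ and $\pi$ on $X$.

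To carry this out, I would invoke the fact that the left leg of $\mathds{V}^{\widehat{\GG}}$ generates $C_0^u(\GG^c)$, namely that the slices $(\id \otimes \widehat{\omega})(\mathds{V}^{\widehat{\GG}})$ with $\widehat{\omega} \in L^1(\widehat{\GG})$ are norm-dense in $C_0^u(\GG^c)$. For (R3) $\Rightarrow$ (R3'): slicing either side of the proposed commutator in the second leg by any $\widehat{\omega}$ yields $\rho^c((\id \otimes \widehat{\omega})(\mathds{V}^{\widehat{\GG}}))\pi(a)$ versus $\pi(a)\rho^c((\id \otimes \widehat{\omega})(\mathds{V}^{\widehat{\GG}}))$, which agree by (R3); since such slices separate multipliers, the commutator itself vanishes. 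For the converse, slicing the assumed commutator by $\id \otimes \widehat{\omega}$ delivers commutation of $\rho^c((\id \otimes \widehat{\omega})(\mathds{V}^{\widehat{\GG}}))$ with $\pi(a)$ for every $\widehat{\omega}$ and every $a$, and the density of the slice elements in $C_0^u(\GG^c)$, together with joint norm-continuity of multiplication in $\mc{L}(X)$, upgrades this to (R3).

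I do not expect any real obstacle: the argument is the standard slicing trick, and the only care needed is the routine fact that $L^1(\widehat{\GG})$-slicing on the second tensor leg is well-defined on multipliers, is multiplicative in the first leg, and separates points of $\mc{L}(X \otimes C_0(\widehat{\GG}))$; these are established features of universal corepresentations in the quantum group literature.
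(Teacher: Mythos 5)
Your proposal is correct and follows essentially the same route as the paper: unfold the trivial-action covariance condition to a single commutation relation between $(\rho^c\otimes\id)(\mathds{V}^{\widehat{\GG}})$ and $\pi(a)\otimes 1$, then pass back and forth to pointwise commutation by slicing the second leg with normal functionals, using that such slices separate points and that the sliced elements are dense in $C_0^u(\GG^c)$. The paper's proof is exactly this slicing computation, with the converse obtained by reversing it.
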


Here, ${\rm triv}\colon A \rightarrow M( A\otimes C_0(\widehat{\GG}))$ denotes the right trivial action given by ${\rm triv}(a) = a\otimes 1$.

\begin{proof}
Assume that (R3) holds. To prove (R3'), we have to show that
\[
(\pi\otimes{\rm id}){\rm triv}(a)=(\rho^c\otimes{\rm id})(\mathds{V}^{\widehat{\GG}})(\pi(a)\otimes 1)(\rho^c\otimes{\rm id})(\mathds{V}^{\widehat{\GG}})^*
\]
for all $a\in A$, which is equivalent to
\[
(\pi(a)\otimes 1)(\rho^c\otimes{\rm id})(\mathds{V}^{\widehat{\GG}})=(\rho^c\otimes{\rm id})(\mathds{V}^{\widehat{\GG}})(\pi(a)\otimes 1).
\]
Let $\omega\in \bdops(L^2(\GG))_*$. Then
\begin{align*}
({\rm id}\otimes\omega)((\pi(a)\otimes 1)(\rho^c\otimes{\rm id})(\mathds{V}^{\widehat{\GG}}))&=\pi(a)\rho^c(({\rm id}\otimes\omega)(\mathds{V}^{\widehat{\GG}}))\\
&=\rho^c(({\rm id}\otimes\omega)(\mathds{V}^{\widehat{\GG}}))\pi(a)\\
&=({\rm id}\otimes\omega)((\rho^c\otimes{\rm id})(\mathds{V}^{\widehat{\GG}})(\pi(a)\otimes 1)).
\end{align*}
Since the above equation holds for any $\omega\in \bdops(L^2(\GG))_*$, the result follows. Again reversing the above computation yields the converse claim.
\end{proof}

We are now prepared to apply Definition~\ref{heirep} and recover the Heisenberg representations promoted in numerous historical contexts. 

\begin{example} \label{relationtomackey}
Consider a Heisenberg $(\GG, A, {\rm triv})$-representation $(X, \rho^c, \hat{\rho}, \pi)$ comprising $A=\mathbb{C}$, $X=H$ a Hilbert space, and $\GG=G$ a classical abelian locally compact group. Then $\rho^c$ and $\widehat{\rho}$ correspond to unitary representations $\widehat{U}$ and $U$ of $\widehat{G}$ and $G$ on $H$ respectively. The conditions (R1) and (R3) are automatically satisfied. Let ${\rm ev}_g\colon C_0(G)\to\mathbb{C}$ and ${\rm ev}_\chi\colon C_0(\widehat{G})\to\mathbb{C}$ be the evaluation functionals at points $g\in G$ and $\chi\in\widehat{G}$ respectively. After applying $({\rm ev}_g\otimes {\rm id}\otimes{\rm ev}_\chi)$ to the defining equation of condition {\rm (R2)}, we obtain
\[
\chi(g)U(g)\widehat{U}(\chi)=\widehat{U}(\chi)U(g),
\]
which is the Mackey's generalization of the Weyl commutation relation~\cite{Mackey}*{Theorem 1}.
\end{example}
\begin{example}
Rieffel refined and generalized Mackey's pioneering work on induced representations of locally compact groups to the setting of of \cs-representation theory (see the introduction to \cite{rinduced} for a nice history), which recovers the Mackey-Stone--von Neumann Theorem treated in Example \ref{relationtomackey} as a special case of Mackey's Imprimitivity Theorem (follow the references in \cite{Rieffel}*{Paragraph Three}). In the latter article, Rieffel defines Heisenberg $G$-modules for an arbitrary locally compact group $G$. These are exactly our Heisenberg $(\GG, A,\alpha)$-representations for $A=\bc$ and $\GG$ the dual to a classical locally compact group. The discerning will notice that the integrand in \cite{Rieffel}*{Equation (3)} has striking similarities to the application of the left regular representation interpreted in the classical setting, an antecedent to our current work. 
\end{example}

\begin{example}
If we take $\GG$ to be a classical abelian locally compact group $G$ and $A$ to be the \cs-algebra of compact operators $\bk$ in Definition~\ref{heirep}, then we recover the definition of the Heisenberg representation given by Huang and Ismert~\cite{HI}. Indeed, they explicitly assume (R1), (R3'), and the Mackey--Weyl commutation relation, which is equivalent to (R2) (see the previous remark).
\end{example}

\begin{example}
In~\cite{modular}, the authors consider parallel generalizations of the Heisenberg representation using tools of nonabelian duality. In each case, the expressions differ in whether $\GG$ is classical (hence derived from a bonafide locally compact group) or universal and dual to classical (i.e the full \cs-algebra of a locally compact group). Correcting for the cosmetics of left and right (co)actions, for classical $\GG$ \cite{modular}*{Definition 4.5} defines a Heisenberg \emph{action}-modular representation in accordance with our conditions (R1), (R2'), and (R3). Likewise, \cite{modular}*{Definition 5.1} defines a Heisenberg \emph{coaction}-modular representation in exactly the same way. Our present work is a proper generalization in that there are numerous locally compact quantum groups which are neither classical nor dual to classical. 
%For example, if we let $\GG=G$ be a classical locally compact group, then $C_0^u(\wh{\GG})$ is isomorphic with the full group \cs-algebra $\cs(G)$ of $G$. Therefore, a coaction of the classical group $G$ in the sense of~\cite{modular} is a coaction of $\wh{\GG}$ in the sense of this paper. The upshot is two divergent definitions 
%Take $\GG$ to a cocommutative locally compact quantum group in Definition~\ref{heirep}. Then the conditions (R1), (R2'), and (R3) are equivalent to the conditions of Definition~5.1 in~\cite{modular}. 
\end{example}
\begin{example}
In~\cite{mrw}, the authors introduced the notion of a~Heisenberg pair which Roy and Timmermann later generalized in~\cite{RT18}. Meyer, Roy, and Woronowicz define Heisenberg pairs in the context of quantum groups derived from manageable multiplicative unitaries (see e.g.~\cite{Wor96}); this concept is closely related to our Heisenberg representations in the sense of Kustermans and Vaes. Compare for instance the parallels between our Proposition~\ref{covcorr} and the argument in~\cite[Theorem~6.2]{RT18}. It is known that Woronowicz description of quantum groups captures the Kustermans and Vaes framework-- that is, given a locally compact quantum group in the sense of this paper, one may construct a manageable multiplicative unitary of the type considered in~\cite{mrw}. 
\end{example}

We end this section with a result which relates our Heisenberg modular representations to dynamics of the crossed product. Much as full crossed products completely capture the dynamics of the associated system, our Heisenberg representations retain the data involved in crossed product constructions. 

\begin{Prop}\label{covcorr}
Let $(\GG,A,\alpha)$ be a left dynamical system and $X$ a Hilbert $A$-module. There is a one-to-one correspondence between Heisenberg representations of $(\GG,A,\alpha)$ on $X$ and covariant representations of the left dynamical system $(\widehat{\GG}^{\rm op},\GG\ltimes A,\widehat{\alpha}^{\rm op})$ on $X$. 
%given by $(X,\rho^c,\widehat{\rho},\pi)\leftrightarrow (\mu,\Pi)$, where $\mu=\rho^c$ and $\Pi=\widehat{\rho}\ltimes\pi$.
\end{Prop}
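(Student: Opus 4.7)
The plan is to realize a Heisenberg representation $(X,\rho^c,\widehat{\rho},\pi)$ as a disintegration of a covariant representation of the iterated dynamical system $(\widehat{\GG}^{\op},\GG\ltimes A,\widehat{\alpha}^{\op})$. Given Heisenberg data, condition (R1) says exactly that $(\widehat{\rho},\pi)$ is a covariant representation of $(\GG,A,\alpha)$, so by the universal property of the full crossed product I will obtain a nondegenerate $*$-homomorphism $\sigma\df\widehat{\rho}\ltimes\pi\colon\GG\ltimes A\to\mc{L}(X)$ satisfying $\sigma\circ j_{\GG}=\widehat{\rho}$ and $\sigma\circ j_A=\pi$. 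Since $\widehat{\widehat{\GG}^{\op}}\cong\GG^c$ by the identifications in Section~\ref{prelim}, the map $\tau\df\rho^c\colon C_0^u(\GG^c)\to\mc{L}(X)$ has the correct domain to serve as the dual-group component, and I will propose $(\tau,\sigma)$ as the corresponding covariant representation.

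To verify covariance, I will use \eqref{crospan} to reduce the required identity
\[
(\id\otimes\sigma)\circ\widehat{\alpha}^{\op}(x)=(\id\otimes\tau)(\wW^{\widehat{\GG}^{\op}})^*(1\otimes\sigma(x))(\id\otimes\tau)(\wW^{\widehat{\GG}^{\op}})
\]
to checking on generators $x=j_A(a)$ and $x=j_{\GG}(h)$. For the first, the defining formula $\widehat{\alpha}^{\op}(j_A(a))=1\otimes j_A(a)$ collapses the identity to requiring that $1\otimes\pi(a)$ commute with $(\id\otimes\rho^c)(\wW^{\widehat{\GG}^{\op}})$, which, because the right leg of $\wW^{\widehat{\GG}^{\op}}$ densely generates the image of $\rho^c$ via slicing, is equivalent to (R3). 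For the second, applying $\id\otimes\sigma$ and using $\sigma\circ j_{\GG}=\widehat{\rho}$ turns the identity into
\[
(\hat{\lambda}\otimes\widehat{\rho})\Delta^u_{\widehat{\GG}^{\op}}(h)=(\id\otimes\rho^c)(\wW^{\widehat{\GG}^{\op}})^*(1\otimes\widehat{\rho}(h))(\id\otimes\rho^c)(\wW^{\widehat{\GG}^{\op}}),
\]
and flipping both sides by $\chi$ while invoking $\Delta^u_{\widehat{\GG}^{\op}}=\chi\circ\Delta^u_{\widehat{\GG}}$ together with the standard identification $\wW^{\widehat{\GG}^{\op}}=\chi(\mathds{V}^{\widehat{\GG}})^*$ (coming from the bijection $u\mapsto\chi(u)^*$ between left representations of $\widehat{\GG}^{\op}$ and right representations of $\widehat{\GG}$) will recast this as condition (R2') of the preceding proposition, which is equivalent to (R2).

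For the converse, starting from a covariant representation $(\tau,\sigma)$ of $(\widehat{\GG}^{\op},\GG\ltimes A,\widehat{\alpha}^{\op})$, I will disintegrate $\sigma$ to define $\widehat{\rho}\df\sigma\circ j_{\GG}$ and $\pi\df\sigma\circ j_A$ (which gives (R1) at once) and set $\rho^c\df\tau$; running the two generator-by-generator calculations above in reverse then delivers (R3) and (R2). Uniqueness of the integrated form ensures that the two constructions are mutually inverse. The main technical obstacle will be the careful bookkeeping around duality and opposite operations---specifically, tracking how $\wW^{\widehat{\GG}^{\op}}$ and $\Delta^u_{\widehat{\GG}^{\op}}$ relate to their $\widehat{\GG}$-counterparts under $\chi$ and $*$---but with (R2) already repackaged as the covariance condition (R2') by the earlier proposition, this reduces to a direct flip computation on the relevant multiplicative unitary.
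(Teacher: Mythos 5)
Your proposal is correct and follows essentially the same route as the paper's proof: integrate $(\widehat{\rho},\pi)$ via (R1), check covariance of $(\rho^c,\widehat{\rho}\ltimes\pi)$ on the generators $j_A(a)$ and $j_\GG(h)$ from \eqref{crospan} using (R3) and (R2') respectively together with $\wW^{\widehat{\GG}^{\rm op}}=\chi(\mathds{V}^{\widehat{\GG}})^*$, and invert by disintegrating through $j_\GG$ and $j_A$. The paper's argument is exactly this, with the (R3) equivalence likewise obtained by slicing the first leg of $\wW^{\widehat{\GG}^{\rm op}}$ and using density of those slices in $C_0^u(\GG^{\c})$.
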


\begin{proof}
Let $(X,\rho^c,\widehat{\rho},\pi)$ be a Heisenberg representation. Define
\[
\mu: =\rho^c, \qquad\Pi:=\widehat{\rho}\ltimes\pi.
\]
Since $(\widehat{\rho}, \pi)$ is a covariant representation of $(\GG,A,\alpha)$ on $X$ by (R1), $\Pi$ is well defined. We have to prove that
\begin{equation}\label{eq1}
({\rm id}\otimes\Pi)\widehat{\alpha}^{\rm op}(x)=({\rm id}\otimes\mu)(\wW^{\widehat{\GG}^{\rm op}})^*(1\otimes\Pi(x))({\rm id}\otimes\mu)(\wW^{\widehat{\GG}^{\rm op}})
\end{equation}
for all $x\in \GG\ltimes A$. Using~\eqref{crospan}, it suffices to prove the above formula for the elements of the form $j_A(a)$, $a\in A$, and $j_\GG(h)$, $h\in C^u_0(\widehat{\GG})$. Observe that
\begin{align*}
({\rm id}\otimes\mu)(\wW^{\widehat{\GG}^{\rm op}})({\rm id}\otimes\Pi)\widehat{\alpha}^{\rm op}(j_A(a))&=
(\id\otimes \mu)({\wW}^{\wh{\GG}^{\rm op}})
(1\otimes \Pi( j_A(a)))
\\
&=({\rm id}\otimes\rho^c)(\wW^{\widehat{\GG}^{\rm op}})(1\otimes \pi(a))=(1\otimes \pi(a))({\rm id}\otimes\rho^c)(\wW^{\widehat{\GG}^{\rm op}})\\
&=(1\otimes \Pi(j_A(a)))({\rm id}\otimes\mu)(\wW^{\widehat{\GG}^{\rm op}}).
\end{align*}
Here, we used the condition (R3). Similarly, we have that
\begin{align*}
({\rm id}\otimes\Pi)\widehat{\alpha}^{\rm op}(j_\GG(h))&=({\rm id}\otimes\Pi)(\hat{\lambda}\otimes j_\GG)\Delta_{\widehat{\GG}^{\rm op}}^u(h)=({\rm id}\otimes\widehat{\rho})(\hat{\lambda}\otimes{\rm id})\Delta_{\widehat{\GG}^{\rm op}}^u(h)\\
&=\chi((\widehat{\rho}\otimes{\rm id})({\rm id}\otimes\hat{\lambda})\Delta_{\widehat{\GG}}^u(h))\\
&=\chi((\rho^c\otimes{\rm id})(\mathds{V}^{\wh{\GG}})(\wh{\rho}(h)\otimes1)(\rho^c\otimes{\rm id})(\mathds{V}^{\wh{\GG}})^*)\\
&=({\rm id}\otimes\mu)(\wW^{\wh{\GG}^{\rm op}})^*(1\otimes\Pi( j_{\GG}(h)))({\rm id}\otimes\mu)(\wW^{\wh{\GG}^{\rm op}}).
\end{align*}
Here we used the condition (R2') and the formula $\chi(\mathds{V}^{\wh{\GG}})^*=\wW^{\wh{\GG}^{\rm op}}$. We have proved equality \eqref{eq1} for generators, which ends proof in one direction.

Now let $(\mu, \Pi)$ be a covariant representation of $(\wh{\GG}^{\rm op},\GG\ltimes A,\wh{\alpha}^{\rm op})$ on $X$. Define
\[
\rho^c:=\mu,\qquad \widehat{\rho}:=\Pi\circ j_\GG,\qquad \pi:=\Pi\circ j_A\,.
\]
(R1) We have to prove that
\[
({\rm id}\otimes\pi)\alpha(a)=({\rm id}\otimes\wh{\rho})(\wW^\GG)^*(1\otimes\pi(a))({\rm id}\otimes\wh{\rho})(\wW^\GG)
\]
for all $a\in A$. Since the above equation is equivalent to
\[
({\rm id}\otimes\Pi)(({\rm id}\otimes j_A)\alpha(a))=({\rm id}\otimes\Pi)(({\rm id}\otimes j_{\GG})(\wW^\GG)^*(1\otimes j_A(a))({\rm id}\otimes j_{\GG})(\wW^\GG)),
\]
the claim follows from the fact that $(j_\GG,j_A)$ is a covariant representation of $(\GG, A,\alpha)$ on $\GG\ltimes A$.

(R2') We reverse one of the above calculations and obtain
\begin{align*}
(\widehat{\rho}\otimes{\rm id})({\rm id}\otimes\hat{\lambda})\Delta_{\widehat{\GG}}^u(h)&=\chi(({\rm id}\otimes\Pi)\widehat{\alpha}^{\rm op}(j_\GG(h)))\\
&=\chi(({\rm id}\otimes\mu)(\wW^{\wh{\GG}^{\rm op}})^*(1\otimes \Pi(j_{\GG}(h)))({\rm id}\otimes\mu)(\wW^{\wh{\GG}^{\rm op}}))\\
&=(\rho^c\otimes{\rm id})(\mathds{V}^{\wh{\GG}})(\wh{\rho}(h)\otimes1)(\rho^c\otimes{\rm id})(\mathds{V}^{\wh{\GG}})^*.
\end{align*}
(R3) We have that
\[
({\rm id}\otimes\mu)(\wW^{\widehat{\GG}^{\rm op}})({\rm id}\otimes\Pi)\widehat{\alpha}^{\rm op}(j_A(a))=(1\otimes\Pi(j_A(a)))({\rm id}\otimes\mu)(\wW^{\widehat{\GG}^{\rm op}}).
\]
Take $\omega\in \bdops(L^2(\GG))_*$. After applying $(\omega\otimes{\rm id})$ to the above equation and using the definitions of $\rho^c$ and $\pi$, we find
\[
\rho^c((\omega\otimes{\rm id})(\wW^{\widehat{\GG}^{\rm op}}))\pi(a)=\pi(a)\rho^c((\omega\otimes{\rm id})(\wW^{\widehat{\GG}^{\rm op}})).
\]
Since the elements $(\omega\otimes{\rm id})(\wW^{\wh{\GG}^{\rm op}})$ span a dense subspace of $C^u_0(\GG^{c})$, we are done.
\end{proof}

\section{The Covariant Stone--von Neumann Theorem}\label{sec:main}

We now introduce the Schr\"odinger representation of a dynamical system $(\GG, A, \alpha)$, which may be profitably considered as a fundamental example of a representation for the dynamics.

\begin{Def}
Let $(\GG,A,\alpha)$ be a left dynamical system. A {\em Schr\"odinger representation} is a quadruple $(L^2(\GG)\otimes A,\lambda^c\otimes 1,\hat{\lambda}\otimes 1,\alpha)$, where
\[
\lambda^c\colon C^u_0(\GG^c)\longrightarrow C_0(\GG^c),\qquad \hat{\lambda}\colon C^u_0(\wh{\GG})\longrightarrow C_0(\wh{\GG}),
\]
are the canonical surjective $*$-homomorphisms.
\end{Def}

\begin{Prop}
The Schr\"odinger representation is a Heisenberg representation.
\end{Prop}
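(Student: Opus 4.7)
The plan is to verify conditions (R1), (R2), and (R3) of Definition~\ref{heirep} in turn for the Schr\"odinger quadruple, with $\widehat\rho=\hat\lambda\otimes 1$, $\rho^c=\lambda^c\otimes 1$, and $\pi=\alpha$ acting on the Hilbert $A$-module $L^2(\GG)\otimes A$. I expect (R1) and (R3) to be formal consequences of the coaction axiom and modular theory respectively, while (R2) will be the main obstacle; after stripping away the trivially acting $A$-leg, (R2) will come down to a short manipulation combining the bicharacter formula $(\id\otimes\wh\Delta)(\ww^\GG)=\ww^\GG_{13}\ww^\GG_{12}$ with the implementation of $\wh\Delta$ by the right regular representation $\vv^{\wh\GG}$.

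For (R1), first note that $(\id\otimes(\hat\lambda\otimes 1))(\wW^\GG)=\ww^\GG_{12}$ in the three-slot picture $C_0(\GG)\otimes L^2(\GG)\otimes A$, so the covariance condition becomes
\[
(\id\otimes\alpha)\alpha(a)=\ww^{\GG *}_{12}(1\otimes\alpha(a))\ww^\GG_{12}.
\]
The left-hand side equals $(\Delta\otimes\id)\alpha(a)$ by the coaction axiom, and the right-hand side equals the same expression by applying the formula $\Delta(x)=\ww^{\GG *}(1\otimes x)\ww^\GG$ leg-wise to the $C_0(\GG)$-factor of $\alpha(a)$. For (R3), the image of $\rho^c$ is contained in $C_0(\GG^c)\otimes 1$ while $\pi(A)=\alpha(A)\subseteq M(C_0(\GG)\otimes A)$, so commutation reduces to the well-known commutation of $C_0(\GG)$ with $C_0(\GG^c)=JC_0(\GG)J$ inside $\bdops(L^2(\GG))$, an instance of the modular-theoretic identity $L^\infty(\GG)'=JL^\infty(\GG)J$ built into the definition of the commutant quantum group.

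The main obstacle is (R2). Substituting $\widehat\rho$ and $\rho^c$, using the universal-property identifications $(\id\otimes\hat\lambda)(\wW^\GG)=\ww^\GG$ and $(\lambda^c\otimes\id)(\mathds{V}^{\wh\GG})=\vv^{\wh\GG}$, and noting that the $A$-slot acts as the identity throughout, the required equality reduces to the three-leg identity
\[
\ww^\GG_{13}\ww^\GG_{12}\vv^{\wh\GG}_{23}=\vv^{\wh\GG}_{23}\ww^\GG_{12}
\]
in $\bdops(L^2(\GG)^{\otimes 3})$. To establish this, I will combine two ingredients from the preliminaries: the bicharacter identity $(\id\otimes\wh\Delta)(\ww^\GG)=\ww^\GG_{13}\ww^\GG_{12}$, and the implementation $\wh\Delta(\hat x)=\vv^{\wh\GG}(\hat x\otimes 1)\vv^{\wh\GG *}$ of the dual comultiplication by $\vv^{\wh\GG}$ (obtained by duality from the recorded formula $\Delta(x)=\vv^\GG(x\otimes 1)\vv^{\GG *}$). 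The second formula, applied leg-wise to the $C_0(\wh\GG)$-factor of $\ww^\GG$, rewrites $(\id\otimes\wh\Delta)(\ww^\GG)$ as $\vv^{\wh\GG}_{23}\ww^\GG_{12}\vv^{\wh\GG *}_{23}$; equating this with $\ww^\GG_{13}\ww^\GG_{12}$ and multiplying on the right by $\vv^{\wh\GG}_{23}$ gives exactly the desired identity.
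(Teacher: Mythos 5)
Your proposal is correct and follows essentially the same route as the paper's own proof: (R1) via $(\id\otimes(\hat\lambda\otimes 1))(\wW^\GG)=\ww^\GG_{12}$ and the implementation of $\Delta$ by $\ww^\GG$, (R2) via the reduction to $\ww^\GG_{13}\ww^\GG_{12}\vv^{\wh\GG}_{23}=\vv^{\wh\GG}_{23}\ww^\GG_{12}$ using $(\id\otimes\wh\Delta)(\ww^\GG)=\ww^\GG_{13}\ww^\GG_{12}$ and the implementation of $\wh\Delta$ by $\vv^{\wh\GG}$, and (R3) from the containments $\rho^c(C_0^u(\GG^c))\subseteq C_0(\GG^c)\otimes 1$ and $\alpha(A)\subseteq M(C_0(\GG)\otimes A)$. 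The only difference is that you make the modular-theoretic reason for (R3) explicit, which the paper leaves implicit.
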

\begin{proof}
(R1) We have to prove that $(\hat{\lambda}\otimes 1, \alpha)$ is a covariant representation of $(\GG,A,\alpha)$ on $L^2(\GG)\otimes A$, i.e.
\[
({\rm id}\otimes \alpha)\alpha(a)=({\rm id}\otimes(\hat{\lambda}\otimes 1))(\wW^\GG)^*(1\otimes \alpha(a))({\rm id}\otimes(\hat{\lambda}\otimes 1))(\wW^\GG).
\]
Note that the LHS equals $(\Delta_{\GG}\otimes \id)\alpha(a)$ due to the covariance condition. We compute the RHS:
\begin{align*}
({\rm id}\otimes(\hat{\lambda}\otimes 1))(\wW^\GG)^*(1\otimes \alpha(a))({\rm id}\otimes(\hat{\lambda}\otimes 1))(\wW^\GG)&=(\ww^\GG_{12})^*(1\otimes \alpha(a))\ww^\GG_{12}=(\Delta_{\GG}\otimes \id)\alpha(a).
\end{align*}
Here, we used the fact that $\ww^\GG$ implements $\Delta_\GG$.

(R2) It suffices to prove that
\[
\ww_{13}^\GG({\rm id}\otimes\hat{\lambda})(\wW^\GG)_{12}(\lambda^{c}\otimes {\rm id})(\mathds{V}^{\wh{\GG}})_{23}=(\lambda^c\otimes {\rm id})(\mathds{V}^{\wh{\GG}})_{23}({\rm id}\otimes\hat{\lambda})(\wW^\GG)_{12}\,,
\]
which follows from direct calculation
\begin{align*}
\ww_{13}^\GG({\rm id}\otimes\hat{\lambda})(\wW^\GG)_{12}(\lambda^c\otimes {\rm id})(\mathds{V}^{\wh{\GG}})_{23}&=\ww^\GG_{13}\ww^\GG_{12}\vv^{\wh{\GG}}_{23}=({\rm id}\otimes\Delta_{\wh{\GG}})(\ww^\GG)\vv^{\wh{\GG}}_{23}\\
&=\vv^{\wh{\GG}}_{23}\ww^\GG_{12}(\vv^{\wh{\GG}}_{23})^*\vv^{\wh{\GG}}_{23}=\vv^{\wh{\GG}}_{23}\ww^\GG_{12}\\&=(\lambda^c\otimes {\rm id})(\mathds{V}^{\wh{\GG}})_{23}({\rm id}\otimes\hat{\lambda})(\wW^\GG)_{12}\,.
\end{align*}
(R3) The images of $\alpha$ and $\lambda^c\otimes 1$ commute, since $\alpha(A)\subseteq M(C_0(\GG)\otimes A)$ and $(\lambda^c\otimes 1)(C_0^u(\GG^c))\subseteq C_0(\GG^c)\otimes 1$.
\end{proof}

Using the machinery we have gathered so far, we can prove one of the main results of our work.

\begin{Thm}\label{main}
Let $ \Trip{\GG}{A}{\alpha} $ be a left dynamical system, where $A$ is an elementary \cs-algebra, $\GG$ is a regular locally compact quantum group, and $\alpha$ is a maximal action.
% \begin{itemize}
% \item
% $ A $ is an elementary $ C^{\ast} $-algebra.
% \item
% $ \G $ is a strongly regular locally compact quantum group.
% \item
% $ \alpha $ is a \lucaswrong{maximal} coaction of $ \G $ on $ A $.
% \end{itemize}
Then every Heisenberg $ \Trip{\G}{A}{\alpha} $-representation is a multiple of the Schr\"{o}dinger $ \Trip{\G}{A}{\alpha} $-representation.
\end{Thm}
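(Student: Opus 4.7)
The plan is to funnel the question through the crossed product correspondence of Proposition~\ref{covcorr} and then invoke the classification of correspondences over elementary \cs-algebras (Theorem~\ref{modularsvn}).

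First, I would use Proposition~\ref{covcorr} to convert any Heisenberg $(\GG,A,\alpha)$-representation $(X,\rho^c,\widehat{\rho},\pi)$ into a covariant representation $(\mu,\Pi)$ of the left dynamical system $(\wh{\GG}^{\op},\GG\ltimes A,\wh{\alpha}^{\op})$ on $X$, and then integrate this to obtain a non-degenerate $*$-homomorphism
\[
\mu\ltimes\Pi\colon \wh{\GG}^{\op}\ltimes(\GG\ltimes A)\longrightarrow\mathcal{L}(X).
\]
Because $\GG$ is regular and $\alpha$ is maximal, by definition the canonical surjection $\wh{\GG}^{\op}\ltimes(\GG\ltimes A)\rightarrow\bk(L^2(\GG))\otimes A$ implemented by $(\lambda^c\otimes 1,(\hat{\lambda}\otimes 1)\ltimes\alpha)$ is an isomorphism. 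Combining these two observations, a Heisenberg representation on $X$ is the same data as a non-degenerate $*$-homomorphism $\bk(L^2(\GG))\otimes A\to\mathcal{L}(X)$, i.e. the same as endowing the right Hilbert $A$-module $X$ with the structure of a $(\bk(L^2(\GG))\otimes A)\text{--}A$ correspondence.

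Next, I would identify the Schr\"{o}dinger representation in this dictionary. The preceding proposition shows that the Schr\"{o}dinger quadruple is a Heisenberg representation whose image of $\mu\ltimes\Pi$ is precisely the canonical isomorphism from $\wh{\GG}^{\op}\ltimes(\GG\ltimes A)$ to $\bk(L^2(\GG))\otimes A$; equivalently, the Schr\"{o}dinger representation corresponds, in the dictionary above, to the standard $(\bk(L^2(\GG))\otimes A)\text{--}A$ imprimitivity bimodule $L^2(\GG)\otimes A$, obtained as the external tensor product of the Hilbert space $L^2(\GG)$ (viewed as a $\bk(L^2(\GG))\text{--}\bc$ imprimitivity bimodule) and the \cs-algebra $A$ (viewed as an $A\text{--}A$ imprimitivity bimodule).

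Now, since $A$ is elementary, so is $\bk(L^2(\GG))\otimes A$. Theorem~\ref{modularsvn} therefore applies: every $(\bk(L^2(\GG))\otimes A)\text{--}A$ correspondence is unitarily equivalent to a direct sum of copies of the imprimitivity bimodule $L^2(\GG)\otimes A$. Translating back through the correspondence dictionary, this says exactly that every Heisenberg $(\GG,A,\alpha)$-representation is unitarily equivalent to a multiple of the Schr\"{o}dinger representation.

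The main obstacle, and the step most worth spelling out carefully, is the final translation back: one must check that a direct sum decomposition at the level of correspondences over $\bk(L^2(\GG))\otimes A$ really produces the multiplicity decomposition of the quadruple $(X,\rho^c,\widehat{\rho},\pi)$ as a multiple of $(L^2(\GG)\otimes A,\lambda^c\otimes 1,\hat{\lambda}\otimes 1,\alpha)$. This amounts to observing that the unitary $A$-module isomorphism intertwining $X$ with $\bigoplus_S(L^2(\GG)\otimes A)$ also intertwines the integrated forms $\mu\ltimes\Pi$ on both sides (by construction, since the correspondence structure was \emph{defined} by $\mu\ltimes\Pi$), and then disintegrating via Proposition~\ref{covcorr} to recover intertwining of each of $\rho^c,\widehat{\rho},\pi$ separately with their Schr\"{o}dinger counterparts.
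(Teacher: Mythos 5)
Your proposal is correct and follows essentially the same route as the paper: convert the Heisenberg representation via Proposition~\ref{covcorr} into a representation of $\wh{\GG}^{\op}\ltimes(\GG\ltimes A)$, use maximality to identify this crossed product with $\bk(L^2(\GG))\otimes A$, recognize the Schr\"odinger representation as the standard imprimitivity bimodule $L^2(\GG)\otimes A$, apply Theorem~\ref{modularsvn}, and disintegrate back. The only cosmetic difference is that the paper verifies the imprimitivity-bimodule structure of $L^2(\GG)\otimes A$ by a direct inner-product computation rather than citing the external tensor product of imprimitivity bimodules, and note that Theorem~\ref{modularsvn} is invoked because the right coefficient algebra $A$ is elementary (not because $\bk(L^2(\GG))\otimes A$ is).
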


\begin{proof}
%\lucas{This is the sketch of how the proof ``should'' go.}

 Consider a Heisenberg $(\GG, A, \alpha)$-representation $(X,\rho^\c,\widehat{\rho},\pi)$ of $(\GG,A,\alpha)$. Then Proposition \ref{covcorr} yields a covariant representation  $(\rho^\c, \widehat{\rho}\ltimes \pi)$ for $(\widehat{\GG}^{\rm op},\GG\ltimes A,\widehat{\alpha}^{\rm op})$. In turn, the covariant representation $(\rho^\c, \widehat{\rho}\ltimes \pi)$ determines a nondegenerate representation of the iterated crossed product 
\[
\rho^\c \ltimes \widehat{\rho}\ltimes\pi\colon  \widehat{\GG}^\op\ltimes(\GG\ltimes A) \to \mathcal{L}(X).
\]
Since $\alpha$ is maximal, we have an isomorphism $ (\lambda^c\otimes 1)\ltimes (\hat{\lambda}\otimes 1) \ltimes \alpha\colon \widehat{\GG}^\op\ltimes(\GG\ltimes A)\to \bk (L^2(\G))\otimes A$. Abbreviating the isomorphism by $\Phi$, we witness $X$ is an $\bk(L^2(\G) )\otimes A- A$ correspondence with left action coming from the $*$-homomorphism $(\rho^c\ltimes \hat{\rho}\ltimes \pi) \circ \Phi\inv$.

Applying this analysis to the Schr\"odinger representation $(L^2(\GG)\otimes A,\lambda^c\otimes 1,\hat{\lambda}\otimes 1,\alpha),$ we can also see that the correspondence $ L^2(\G) \otimes A$ is an $ \bk(L^2(\G))\otimes A-A$ correspondence. It is in fact an imprimitivity bimodule. To see this, notice that the map $\Phi$ in the previous paragraph agrees with the iterated crossed product $*$-homomorphism associated to the Schr\"odinger representation, so that the left action of $\bk(L^2(\G))\otimes A$ is given by the identity. Since $A$ and $\bk(L^2(\G))\otimes A$ are both simple, the fullness of $L^2(\GG)\otimes A$ is clear, and for the compatibility of the inner products, we compute for elementary tensors
\begin{align*} 
\langle h\otimes c, g\otimes b \rangle \cdot f\otimes a &= (\langle h, g \rangle \otimes \langle c, b \rangle) \cdot f\otimes a = (\theta_{h, g} \otimes cb^* )\cdot f\otimes a = h\langle g, f \rangle \otimes cb^*a \\
&= h\otimes c \cdot (\langle g, f\rangle \otimes \langle b, a\rangle) = h\otimes c(\langle g\otimes b, f\otimes a\rangle). 
\end{align*}
Thus we may apply Theorem \ref{modularsvn} to deduce that $X$ is a multiple of $ L^2(\G)\otimes A$. We suppress the unitary which implements the equivalence $X\cong \bigoplus_S L^2(\GG)\otimes A$, and proceed by  identifying these two modules.  

To finish, disintegrate the integrated form 
\[\rho^\c \ltimes \widehat{\rho}\ltimes\pi\colon  \widehat{\GG}^\op\ltimes(\GG\ltimes A) \to \mc{L}(\bigoplus_S  L^2(\GG)\otimes A),\] 
recovering the covariant representation $(\rho^\c, \hat{\rho}\ltimes\pi)$ of $(\widehat{\G}^\op, \G\ltimes A, \hat{\alpha}^\op).$ Applying Proposition \ref{covcorr}, we recover the Heisenberg $(\G,A, \alpha)$-representation $(\oplus_S \big(L^2(\G)\otimes A\big), \rho^\c, \hat{\rho}, \pi).$ Since each summand is invariant under the $*$-representations, this concludes the proof.
%Then, we apply the correspondence between crossed products and their associated covariant representations, together with Proposition \ref{covcorr}, to recover that the given Heisenberg $\Trip{A}{\G}{\alpha}$-representation $\Quad{\X}{\pi}{U}{V}$ is a multiple of the Schr\"odinger representation $\Quad{A \otimes \L{\G}}{\Br{L_{A} \otimes \Id_{\Co{\G}}} \circ \alpha}{\mathbf{1} \otimes \V_{\G}}{\mathbf{1} \otimes \V_{\Comm{\Dual{\G}}}}.$ This concludes the proof. 
\end{proof}

Let us explicitly state the above result for $A=\mathbb{C}$ which can be thought of as a generalization of the Mackey--Stone--von Neumann theorem~\cite{Mackey} to quantum groups. %Note that maximality of the trivial action of $\mathbb{G}$ on $\mathbb{C}$ is equivalent to strong regularity of $\widehat{\GG}^{\rm op}$ which is equivalent to strong regularity of $\widehat{\GG}$.(?)
\begin{corollary}\label{svnmthm}
Let $\GG$ be a regular locally compact quantum group and assume that the trivial
%the canonical action 
%$\Delta^u_{\widehat{\GG}^\op}$ 
%of $\widehat{\mathbb{G}}^{\rm op}$ on $C^u_0(\widehat{\mathbb{G}})$ is maximal.
of $\GG$ on $\bc$ is maximal. Then every Heisenberg $(\GG,\mathbb{C},{\rm triv})$-representation is a multiple of the Schr\"odinger $(\GG,\mathbb{C},{\rm triv})$-representation $(L^2(\GG),\lambda^c,\hat{\lambda},{\rm triv})$.
\end{corollary}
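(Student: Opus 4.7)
The plan is to obtain this as an immediate specialization of Theorem~\ref{main}. First I would observe that $\mathbb{C}$ is an elementary C*-algebra, since $\mathbb{C} \cong \mathbb{K}(\mathbb{C})$ is the algebra of compact operators on the one-dimensional Hilbert space. The remaining hypotheses of Theorem~\ref{main}, namely regularity of the acting quantum group and maximality of the action, are precisely the standing assumptions of the corollary.

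With the three hypotheses of Theorem~\ref{main} in place for the dynamical system $(\GG, \mathbb{C}, \mathrm{triv})$, the theorem directly yields that every Heisenberg representation of this system is a multiple of its Schr\"odinger representation. It then only remains to identify the Schr\"odinger representation of $(\GG, \mathbb{C}, \mathrm{triv})$ with the quadruple $(L^2(\GG), \lambda^c, \hat\lambda, \mathrm{triv})$, which follows by unwinding the definition: the Hilbert module $L^2(\GG) \otimes \mathbb{C}$ is canonically $L^2(\GG)$, the map $\hat\lambda \otimes 1$ reduces to $\hat\lambda$, and the representation $\lambda^c \otimes 1$ reduces to $\lambda^c$, while $\alpha = \mathrm{triv}$.

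There is no substantive obstacle here; the only sanity check is that the hypothesis ``the trivial action of $\GG$ on $\mathbb{C}$ is maximal'' does agree with the notion of maximality invoked in Theorem~\ref{main}, which we can confirm via the canonical surjection $\widehat{\GG}^{\mathrm{op}} \ltimes (\GG \ltimes \mathbb{C}) \to \mathbb{K}(L^2(\GG)) \otimes \mathbb{C} \cong \mathbb{K}(L^2(\GG))$ being an isomorphism, exactly as used in the final part of the proof of Proposition~\ref{maxtrivdual}.
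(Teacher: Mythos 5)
Your proposal is correct and matches the paper exactly: the corollary is stated as the immediate specialization of Theorem~\ref{main} to $A=\mathbb{C}$, which is elementary, with regularity and maximality supplied by hypothesis. The identification of the Schr\"odinger representation with $(L^2(\GG),\lambda^c,\hat{\lambda},{\rm triv})$ via $L^2(\GG)\otimes\mathbb{C}\cong L^2(\GG)$ is the only unwinding needed, just as you describe.
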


%\begin{remark}
%Our proof of Theorem~\ref{main} is inspired by Rieffel's original idea~\cite{Rieffel} and in the case of Corollary~\ref{svnmthm} one could adapt the proof of~\cite[Theorem~6.2]{RT18} instead of using Proposition~\ref{covcorr}.
%\end{remark}

In light of Theorem \ref{main}, we introduce a new property for a left dynamical system $(\GG, A, \alpha)$, which is a quantum-dynamical analog of the behavior described in \cite{Rieffel}*{Theorem} and \cite{wcrossed}*{Theorem 4.29}. 

\begin{Def}
A dynamical system $(\GG, A, \alpha)$ is called Stone--von Neumann unique provided that every Heisenberg $(\GG,A,\alpha)$-representation is a multiple of the Schr\"{o}dinger $(\GG,A,\alpha)$-representation.
\end{Def}
By Theorem \ref{main} if $\GG$ is regular, $A$ is elementary and $\alpha$ is maximal, then $(\GG, A, \alpha)$ is Stone--von Neumann unique. In the other direction, given a dynamical system $(\GG, A, \alpha)$ which possesses Stone--von Neumann uniqueness, one may investigate what can be said of the elements of the dynamical system. We spend the remainder of this article pursuing these questions. The following key theorem is the main result of the paper, which relates the multiplicity result from Theorem~\ref{main} to the properties of the spectra and provides the essential tool for analyzing Stone-von Neumann unique dynamical systems.

\begin{Thm}\label{prop1}
Let $\GG$ be a locally compact quantum group and $A$ a $C^*$-algebra admitting a faithful state. Assume that there is a dynamical  $(\GG,A,\alpha)$ which is Stone--von Neumann unique. Then the spectrum of $\wh{\GG}^\op\ltimes (\GG\ltimes A)$ consists of a single point.
\end{Thm}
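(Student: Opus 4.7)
The strategy is to exploit the faithful state $\omega$ on $A$ as a bridge between Hilbert $A$-module representations of $B:=\wh{\GG}^{\op}\ltimes(\GG\ltimes A)$, which are governed by the Stone--von Neumann uniqueness hypothesis, and Hilbert space representations of $B$, whose unitary equivalence classes constitute the spectrum. Schematically, I would amplify any Hilbert space representation of $B$ to one on a Hilbert $A$-module, apply SvN uniqueness there, and then descend via the GNS construction associated to $\omega$.

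Let $\Phi_{\rm Sch}\colon B\to \mc{L}(L^2(\GG)\otimes A)$ denote the integrated form of the Schr\"odinger Heisenberg representation. Given any nondegenerate representation $\sigma\colon B\to\bdops(K)$ on a Hilbert space $K$, form the amplification $\sigma\otimes 1_A\colon B\to\mc{L}(K\otimes A)$ on the external tensor product Hilbert $A$-module $K\otimes A$. This amplification is nondegenerate, and after disintegration Proposition~\ref{covcorr} produces a Heisenberg $(\GG,A,\alpha)$-representation on $K\otimes A$. The Stone--von Neumann uniqueness hypothesis then yields a set $S$ and a Hilbert $A$-module unitary
\[
U\colon K\otimes A\longrightarrow\bigoplus_S(L^2(\GG)\otimes A)
\]
such that $U(\sigma(b)\otimes 1_A)U^{*}=\bigoplus_S\Phi_{\rm Sch}(b)$ for every $b\in B$.

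I would next descend this equivalence to Hilbert spaces via the $\omega$-inner product $\langle\xi,\eta\rangle_\omega:=\omega(\langle\xi,\eta\rangle_A)$ on Hilbert $A$-modules. The completions are $K\otimes L^2(A,\omega)$ and $\bigoplus_S(L^2(\GG)\otimes L^2(A,\omega))$, where $L^2(A,\omega)$ is the GNS space of $\omega$. The key inequality $T^{*}T\leq\|T\|^2\,\mathrm{id}$ in $\mc{L}(X)$ for any adjointable $T$ on a Hilbert $A$-module $X$ ensures that $T$ is bounded in the $\omega$-norm with the same operator-norm bound, so the descent $T\mapsto T_\omega$ extends to a $*$-homomorphism $\mc{L}(X)\to\bdops(X_\omega)$. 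Applying this descent to $U$, $\sigma\otimes 1_A$, and $\Phi_{\rm Sch}$ yields a Hilbert space unitary $U_\omega$ intertwining $\sigma\otimes 1_{L^2(A,\omega)}$ with $\bigoplus_S\Phi_{{\rm Sch},\omega}$, where $\Phi_{{\rm Sch},\omega}\colon B\to\bdops(L^2(\GG)\otimes L^2(A,\omega))$ is the Hilbert space descent of $\Phi_{\rm Sch}$ (concretely, the integrated form of the Schr\"odinger data composed with the GNS representation in the $A$-leg). Identifying $K\otimes L^2(A,\omega)\cong K^{\oplus\kappa}$ with $\kappa:=\dim L^2(A,\omega)$, this reads $\sigma^{\oplus\kappa}\cong\Phi_{{\rm Sch},\omega}^{\oplus|S|}$ as Hilbert space representations of $B$.

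To conclude, I would apply this observation to two nonzero irreducible representations $\sigma_1,\sigma_2$ of $B$ and cross-amplify the resulting equivalences to obtain $\sigma_1^{\oplus\kappa|S_2|}\cong\sigma_2^{\oplus\kappa|S_1|}$; projecting a $\sigma_1$-summand on the left onto a $\sigma_2$-summand on the right produces a nonzero bounded intertwiner between irreducibles, and Schur's lemma forces $\sigma_1\cong\sigma_2$. Hence the spectrum of $B$ is a single point. I expect the main obstacle to lie in the descent step: verifying that the Hilbert $A$-module unitary $U$ and the Schr\"odinger integrated form transfer cleanly to the Hilbert space completions and that unitarity together with the intertwining identity survive the descent. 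The final representation-theoretic bookkeeping across arbitrary cardinal multiplicities is delicate but a standard application of Schur's lemma.
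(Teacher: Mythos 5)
Your proposal is correct and follows essentially the same route as the paper: amplify a Hilbert space representation of $\wh{\GG}^\op\ltimes(\GG\ltimes A)$ to the Hilbert $A$-module $H\otimes A$, invoke Proposition~\ref{covcorr} and Stone--von Neumann uniqueness to identify it with a multiple of the Schr\"odinger representation, descend to Hilbert spaces via the GNS construction for the faithful state, and conclude with the fact that equivalent multiples of irreducibles force the irreducibles themselves to be equivalent. The only cosmetic differences are that you package the descent as the standard localization $X\mapsto X_\omega$ (the paper builds $\id\otimes\Lambda_\varpi$ by hand) and you cross-amplify after descending rather than aligning the index sets via $S=S_\theta\times S_{\tilde\theta}\times\N$ beforehand.
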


\begin{proof}
We begin with a Stone--von Neumann unique dynamical system $(\GG, A, \alpha)$, and must show that any two irreducible representations $\theta,\tilde{\theta}$ of $\wh{\GG}^\op\ltimes (\GG\ltimes A)$ on Hilbert spaces $H_\theta,H_{\tilde{\theta}}$ are equivalent. First, consider $\theta\otimes 1$ as a representation of $\wh{\GG}^\op\ltimes (\GG\ltimes A)$ on Hilbert-$A$ module $H_\theta\otimes A$, similarly $\tilde{\theta}\otimes 1$. Using the canonical covariant representation of $(\wh{\GG}^\op,\GG\ltimes A,\widehat{\alpha}^\op)$
\[
j_{\wh{\GG}^\op}\colon C_0^u(\GG^{c})\rightarrow 
M(\wh{\GG}^\op\ltimes (\GG\ltimes A)),\quad
j_{\GG\ltimes A}\colon 
\GG\ltimes A\rightarrow
M(\wh{\GG}^\op\ltimes (\GG\ltimes A))
\]
and then composing with $\theta\otimes 1$ (extended uniquely to a strictly continuous map on the multiplier algebra), we obtain a covariant representation of $(\wh{\GG}^\op,\GG\ltimes A,\wh{\alpha}^\op)$ on $H_\theta\otimes A$. By Proposition \ref{covcorr}, we recover the Heisenberg representation $(H_\theta\otimes A, \rho^c_\theta,\wh{\rho}_\theta,\pi_\theta)$ of $(\GG,A,\alpha)$. More concretely,
\[
\rho^c_\theta =(\theta \circ j_{\wh{\GG}^\op})\otimes 1,\quad
\wh{\rho}_\theta= (\theta \circ j_{\GG\ltimes A}\circ j_{\GG})\otimes 1,\quad
\pi_\theta=(\theta\circ j_{\GG\ltimes A}\circ j_{A})\otimes 1,
\]
where
\[
j_{\GG}\colon C_0^u(\wh{\GG})\rightarrow 
M(\GG\ltimes A),\quad
j_A\colon A\rightarrow M(\GG\ltimes A)
\]
are the canonical maps. We can perform a similar construction for $\tilde{\theta}$. By our assumption, there is a set $S_\theta$ so that $(H_\theta\otimes A, \rho^c_\theta,\widehat{\rho}_\theta,\pi_\theta)$ is a $|S_\theta|$-multiple of the Schr{\"o}dinger representation. Similarly, ${(H_{\tilde{\theta}}\otimes A, \rho^c_{\tilde{\theta}},\widehat{\rho}_{\tilde{\theta}},\pi_{\tilde{\theta}})}$ is a Heisenberg representation which is equivalent to a $|S_{\tilde{\theta}}|$-multiple of the Schr{\"o}dinger representation. Let $S=S_\theta\times S_{\tilde{\theta}}\times \mathbb{N}$. By cardinal arithmetic, we have $|S\times S_{\theta}| =|S|=|S\times S_{\tilde{\theta}}|$, hence $|S|$-multiples of $(H_\theta\otimes A, \rho^c_\theta,\widehat{\rho}_\theta,\pi_\theta)$ and $(H_{\tilde{\theta}}\otimes A, \rho^c_{\tilde{\theta}},\widehat{\rho}_{\tilde{\theta}},\pi_{\tilde{\theta}})$ are equivalent. Let
\[
U\colon \bigoplus_{S} H_\theta\otimes A \rightarrow \bigoplus_{S} H_{\tilde{\theta}}\otimes A
\]
be the corresponding isomorphism which implements this equivalence. \\

By our assumption, there is a faithful state $\varpi$ on $A$, so let $K_\varpi$ be the corresponding GNS Hilbert space and $\Lambda_\varpi$ the corresponding GNS map. Let us consider
\begin{equation}\label{eq1.1}
 \id\otimes \Lambda_\varpi\colon H_{ \theta}\otimes A\rightarrow H_{\theta}\otimes K_\varpi
\end{equation}
which on a linearly dense set acts via $\xi\otimes  a\mapsto \xi\otimes \Lambda_\varpi(a)$. Take an arbitrary linear combination from this set $\sum_{i} \xi_i\otimes a_{i}\in H_{\theta}\odot A$ and calculate
\[\begin{split}
\bigl\|
(\id\otimes \Lambda_\varpi)\bigl(
\sum_{i}
\xi_i\otimes a_i
\bigr)\bigr\|^2
&=
\sum_{i,j}
\la \xi_i|\xi_{j}\ra 
\la \Lambda_\varpi(a_{i}) |
\Lambda_{\varpi}(a_{j})\ra =
\sum_{i,j}
\la \xi_i|\xi_{j}\ra 
\varpi(a_{i}^* a_{j})\\
&=
\varpi\bigl(
\bigl \la \sum_{i} \xi_i \otimes a_{i},
 \sum_{i} \xi_i \otimes a_{i}\bigr\ra\bigr)\le
 \bigl\|
\bigl \la \sum_{i} \xi_i \otimes a_{i},
 \sum_{i} \xi_i \otimes a_{i}\bigr\ra
\bigr\|\\
&=\bigl\|
\sum_{i}
\xi_i \otimes a_{i}
\bigr\|^2.
\end{split}\]
It follows that \eqref{eq1.1} is a well defined contraction. Next we claim that
\begin{equation}\label{eq2}
\la (\id\otimes \Lambda_\varpi)(x)|
(\id\otimes  \Lambda_\varpi)(x')\ra=
\varpi(\la x,x'\ra ).
\end{equation}
for $x,x'\in H_{\theta}\otimes A$. Both sides are sesquilinear and continuous in $x,x'$, hence it is enough to check \eqref{eq2} on a linearly dense set: take $x=\xi\otimes a,x'=\xi'\otimes a'$. Then
\[
\la (\id\otimes \Lambda_\varpi)(x)|
( \id\otimes \Lambda_\varpi)(x')\ra=
\la \xi \otimes  \Lambda_\varpi(a)|
\xi' \otimes 
\Lambda_{\varpi}(a')\ra =
\la \xi|\xi'\ra   \varpi(a^* a')=
\varpi(\la x,x'\ra ).
\]
Note that from \eqref{eq2} follows that $\id\otimes \Lambda_\varpi$ is injective. A similar reasoning can be performed for $\tilde{\theta}$ -- we will abuse the notation and denote the corresponding map also by $\id\otimes \Lambda_\varpi$. Next let us define a linear map between Hilbert spaces
\[
\tilde{U}\colon \bigoplus_{S} H_\theta\otimes K_\varpi\rightarrow
\bigoplus_{S}  H_{\tilde{\theta}}\otimes K_\varpi.
\]
First, on a dense subspace $\bigoplus_S (\id\otimes \Lambda_\varpi)(  H_\theta\odot A)$, we set
\begin{equation}\label{eq3}
\tilde{U}\bigl( ((\id\otimes \Lambda_\varpi)\Omega_i)_{i\in S}\bigr)=
\bigl( (\id\otimes \Lambda_\varpi)(\zeta_i)\bigr)_{i\in S}\textnormal{ where }
U 
\bigl( (\Omega_i)_{i\in S}\bigr)=(\zeta_i)_{i\in S}.
\end{equation}
Since $\varpi$ is faithful, $\tilde{U}$ is well defined on a dense subset. Let us argue that it is isometric. Take $((\id\otimes \Lambda_\varpi)\Omega_i)_{i\in S}$ and $\zeta_i$ as above. Then using \eqref{eq2} we have
\[\begin{split}
\bigl\| \tilde{U}\bigl((
(\id \otimes \Lambda_\varpi)\Omega_i)_{i\in S}\bigr)\bigr\|^2
&=
\bigl\| \bigl(( \id\otimes \Lambda_{\varpi})(\zeta_i)\bigr)_{i\in S}\bigr\|^2=
\sum_{i\in S} \|(\id\otimes \Lambda_\varpi)(\zeta_i)\|^2\\
&=
\sum_{i\in S} \varpi(\la \zeta_i,\zeta_i\ra )=
\varpi\bigl( \sum_{i\in S} \la \zeta_i,\zeta_i\ra \bigr)=
\varpi\bigl( \bigl\la (\zeta_i)_{i\in S},
(\zeta_i)_{i\in S}\bigr\ra \bigr)\\
&=
\varpi\bigl(
\bigl\la 
U 
\bigl( (\Omega_i)_{i\in S}\bigr),
U 
\bigl( (\Omega_i)_{i\in S}\bigr)\bigr\ra\bigr)=
\varpi\bigl(
\bigl\la 
(\Omega_i)_{i\in S},(\Omega_i)_{i\in S}\bigr\ra\bigr)\\
&=
\sum_{i\in S} 
\varpi(\la \Omega_i,  \Omega_i\ra )
=
\sum_{i\in S} 
\la (\id\otimes \Lambda_\varpi)\Omega_i | (\id\otimes \Lambda_\varpi)\Omega_i\ra\\ 
&=
\bigl\|
\bigl(
(\id\otimes \Lambda_\varpi)\Omega_i
\bigr)_{i\in S}
\bigr\|^2.
\end{split}\]
This shows that $\tilde{U}$ is an isometry. Clearly it has dense image; consequently, it is unitary.\\

Next, we check that $\tilde{U}$ is an intertwiner between two representations of $M( \wh{\GG}^\op\ltimes (\GG\ltimes A))$, the first being $\bigoplus_S \theta\otimes 1$ on $\bigoplus_S H_{\theta}\otimes K_\varpi$ and the other $\bigoplus_{S}  \tilde{\theta}\otimes 1$ on  $\bigoplus_S H_{\tilde{\theta}}\otimes K_\varpi$. From \eqref{eq3} it follows that
\[
\tilde{U}\bigl(\bigoplus_S ( \id\otimes \Lambda_\varpi)\bigr)=
\bigl(\bigoplus_S (\id\otimes\Lambda_\varpi)\bigr) 
U\quad\textnormal{ on } \bigoplus_S H_\theta\otimes A.
\]
We need to check that 
\[
\tilde{U} \bigl(\bigoplus_S \theta(x)\otimes 1\bigr)=
\bigl(\bigoplus_S \tilde{\theta}(x)\otimes 1\bigr)\tilde{U}
\]
for $x\in M(\wh{\GG}^\op\ltimes (\GG\ltimes A))$. However, since $M(\wh{\GG}^\op\ltimes (\GG\ltimes A))$ is generated by the images of $C_0^u(\GG^c), C_0^u(\wh{\GG}), A$ it is enough to consider $x$ of the form $j_{\wh{\GG}^\op}(y)$, $j_{\GG\ltimes A}\circ j_{\GG}(z)$ and $j_{\GG\ltimes A}\circ j_A(a)$. Take $a\in A$.

\[\begin{split}
&\quad\;
\tilde{U}
\bigl(\bigoplus_S  \theta(j_{\GG\ltimes A}\circ j_A(a))\otimes 1\bigr)
(\xi_i\otimes  \Lambda_\varpi(a_i))_{i\in S}=
\tilde{U}
(\theta(j_{\GG\ltimes A}\circ j_A(a))\xi_i \otimes \Lambda_\varpi(a_i))_{i\in S}\\
&=
\tilde{U}
\bigl( \bigoplus_S ( \id\otimes \Lambda_\varpi)\bigr)
( \theta(j_{\GG\ltimes A}\circ j_A(a))\xi_i \otimes a_i)_{i\in S}=
\bigl( \bigoplus_S (\id\otimes \Lambda_\varpi)\bigr)
U
( \theta(j_{\GG\ltimes A}\circ j_A(a))\xi_i \otimes a_i)_{i\in S}\\
&=
\bigl( \bigoplus_S ( \id\otimes \Lambda_\varpi)
(\tilde{\theta}(j_{\GG\ltimes A}\circ j_A(a))\otimes 1)
\bigr)
U
(\xi_i\otimes a_i)_{i\in S}\\
&=
\bigl( \bigoplus_S 
 \tilde{\theta}(j_{\GG\ltimes A}\circ j_A(a))\otimes 1\bigr)
\bigl(
\bigoplus_S ( \id\otimes \Lambda_\varpi)
\bigr)
U
(\xi_i\otimes  a_i)_{i\in S}\\
&=
\bigl( \bigoplus_S 
  \tilde{\theta}(j_{\GG\ltimes A}\circ j_A(a))\otimes 1\bigr)
\tilde{U}
(\xi_i\otimes \Lambda_{\varpi}(a_i))_{i\in S}.
\end{split}\]
The remaining conditions can be checked in a completely analogous way. This shows that $\bigoplus_S \theta\otimes 1$ and  $\bigoplus_{S} \tilde{\theta}\otimes 1$ are equivalent. Consequently, they are multiples of $\theta$ and $\tilde{\theta}$ which are equivalent. But since $\theta,\tilde{\theta}$ are assumed to be irreducible, it follows that they are equivalent themselves (\cite[Proposition 5.3.3]{DixmierC}). In particular, they are equivalent as representations of $\wh{\GG}^\op\ltimes (\GG\ltimes A)$ and consequently this $C^*$-algebra has spectrum consisting of one point. 
\end{proof}

Let us note here that the above proposition applies to any separable $C^*$-algebra, since such a~$C^*$-algebra admits a faithful state. 

With this in hand, we will now see that a complete characterization of coefficient algebras $A$ boils down to Naimark's problem \cite{naimark}. To this end, let us make a general observation that for the trivial action $(\GG, A, \operatorname{triv})$, we have
\begin{equation}\label{eq4}
\wh{\GG}^\op\ltimes (\GG\ltimes A)\cong (\wh{\GG}^\op\ltimes C_0^u(\wh{\GG}))\otimes_{\textnormal{max}} A.
\end{equation}

\begin{corollary}\label{convelem}
Let $\GG$ be a locally compact quantum group and $A$ a separable \cs-algebra, and assume that $(\GG, A, \operatorname{triv})$ enjoys Stone--von Neumann uniqueness. Then $A$ is an elementary \cs-algebra.
\end{corollary}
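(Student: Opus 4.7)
The plan is to exhibit $A$ as a~separable $C^*$-algebra with single-point spectrum and then invoke the separable case of Naimark's problem~\cite{naimark}. To begin, since $A$ is separable it admits a~faithful state, so Theorem~\ref{prop1} applies and yields that the spectrum of $\wh{\GG}^\op\ltimes(\GG\ltimes A)$ consists of a~single point. Via the isomorphism \eqref{eq4}, this translates to: the spectrum of $B \otimes_{\max} A$ is a~single point, where $B := \wh{\GG}^\op\ltimes C_0^u(\wh{\GG})$. Non-emptiness of this spectrum forces both $A$ and $B$ to be nonzero.

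The key step is to pass single-pointedness from $B \otimes_{\max} A$ down to $A$ itself. Let $\sigma$ be the unique (up to equivalence) irreducible representation of $B \otimes_{\max} A$. Because the Jacobson radical of a~$C^*$-algebra vanishes and there is only one irreducible class, $\sigma$ is faithful; it then follows that $B \otimes_{\max} A$ is simple, since a~nonzero proper ideal would yield an irreducible quotient representation with nontrivial kernel, contradicting uniqueness. By the Dauns--Hofmann theorem $(B \otimes_{\max} A)^{**}$ is then a~factor, and the normal extension of $\sigma$ yields an isomorphism $(B \otimes_{\max} A)^{**} \cong \bdops(H_\sigma)$, a~type I factor. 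Consequently every nondegenerate Hilbert-space representation of $B \otimes_{\max} A$ is an amplification of $\sigma$, and in particular is quasi-equivalent to $\sigma$.

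Given irreducible representations $\rho_1,\rho_2$ of $A$ and any irreducible representation $\pi$ of $B$, the commuting pair $(\pi\otimes 1, 1\otimes\rho_i)$ produces a~representation $\pi\otimes\rho_i$ of $B\otimes_{\max}A$ on $H_\pi\otimes H_{\rho_i}$, which by the previous paragraph is quasi-equivalent to $\sigma$. Restricting these quasi-equivalences along the canonical embedding $A\hookrightarrow M(B\otimes_{\max}A)$ shows that $1\otimes\rho_1$ and $1\otimes\rho_2$ are quasi-equivalent as representations of $A$. But each $1\otimes\rho_i$ is a~$(\dim H_\pi)$-fold amplification of $\rho_i$, so transitivity yields that $\rho_1$ and $\rho_2$ are quasi-equivalent; irreducibility then forces them to be unitarily equivalent. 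Hence $A$ has a~single-point spectrum, and the separable case of Naimark's problem~\cite{naimark} delivers $A\cong\bk(H)$ for some separable Hilbert space $H$, so $A$ is elementary.

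The main obstacle will be the middle step, and in particular the identification of $(B\otimes_{\max}A)^{**}$ as a~type I factor via the Dauns--Hofmann theorem together with the normal extension of the unique irreducible representation. Once this is in place, the passage between single-point spectra of $B\otimes_{\max} A$ and of $A$ is a~routine quasi-equivalence argument, and the final invocation of Naimark's problem is immediate.
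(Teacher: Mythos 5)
Your overall route is the same as the paper's: tensor an irreducible representation of $B=\wh{\GG}^\op\ltimes C_0^u(\wh{\GG})$ with two irreducible representations of $A$, use the one-point spectrum of $B\otimes_{\textnormal{max}}A$ coming from Theorem~\ref{prop1} and the isomorphism \eqref{eq4} to compare them, restrict back to $A$, cancel the multiplicity, and finish with the separable case of Naimark's problem. However, the step you yourself flag as the ``key step'' is false as stated. A one-point spectrum does \emph{not} imply that $(B\otimes_{\textnormal{max}}A)^{**}$ is a type I factor, nor that every nondegenerate representation is an amplification of $\sigma$: that assertion is precisely Naimark's problem for $B\otimes_{\textnormal{max}}A$, which is not separable here (there is no separability hypothesis on $\GG$, hence none on $B$), and by Akemann--Weaver \cite{awCounterexample} it is consistent with ZFC that it fails for nonseparable algebras. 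Dauns--Hofmann is also misapplied: it identifies the center of the \emph{multiplier} algebra with bounded functions on the primitive ideal space and says nothing about the center of the bidual; the bidual of a simple $C^*$-algebra is in general very far from being a factor. Likewise, the normal extension of a faithful irreducible $\sigma$ to the bidual is surjective onto $\bdops(H_\sigma)$ but is injective only if every representation is quasi-contained in $\sigma$ --- which is again the very statement at issue.

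Fortunately this false step is an unnecessary detour. All you use downstream is that $\pi\otimes\rho_1$ and $\pi\otimes\rho_2$ are quasi-equivalent, and that is immediate: each $\pi\otimes\rho_i$ is irreducible (the commutant of $\pi(B)''\,\bar\otimes\,\rho_i(A)''=\bdops(H_\pi\otimes H_{\rho_i})$ is trivial), and any two irreducible representations of a $C^*$-algebra whose spectrum is a single point are unitarily equivalent by definition. With the middle paragraph replaced by this one-line observation, your argument coincides with the paper's: the paper extends the intertwining unitary to the multiplier algebra, notes that $1\otimes\vartheta_2$ and $1\otimes\tilde{\vartheta}_2$ are equivalent multiples of the two irreducible representations of $A$, cancels the multiplicity via \cite[Proposition 5.3.3]{DixmierC} (you use quasi-equivalence of irreducibles, which amounts to the same thing), and then cites \cite[Exercise 4.7.2]{DixmierC} for the separable Naimark statement that you attribute to \cite{naimark}.
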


\begin{proof}
Note that since $A$ is separable, it has a faithful state. Let $\vartheta_2,\tilde{\vartheta_2}$ be irreducible representations of $A$ and $\vartheta_1$ an irreducible representation of $\wh{\GG}^\op\ltimes C_0^u(\wh{\GG})$. Then we can form a representation $\vartheta_1\otimes \vartheta_2$ of $(\wh{\GG}^\op\ltimes C_0^u(\wh{\GG}))\otimes_{\textnormal{max}} A$ on $H_{\vartheta_1}\otimes H_{\vartheta_2}$; same for $\vartheta_1\otimes \tilde{\vartheta}_2$. They are irreducible, hence by the isomorphism \eqref{eq4} above  and Theorem \ref{prop1}, there is a unitary $v\colon H_{\vartheta_1}\otimes H_{\vartheta_2}\rightarrow H_{\vartheta_1}\otimes H_{\tilde{\vartheta}_2}$ such that $v(\vartheta_1\otimes \vartheta_2)(\cdot) = (\vartheta_1\otimes \tilde{\vartheta}_2)(\cdot)v$. Extend these representations to multiplier algebras, and consider representations $A\ni a\mapsto (\vartheta_1\otimes \vartheta_2)(1\otimes a)=1\otimes \vartheta_2(a)\in \bdops(H_{\vartheta_1}\otimes H_{\vartheta_2})$, and same for $\tilde{\vartheta}_2$. These are multiples of $\vartheta_2,\tilde{\vartheta}_2$ which are equivalent via the unitary $v$. Since $\vartheta_2,\tilde{\vartheta}_2$ are irreducible, we can use again \cite[Proposition 5.3.3]{DixmierC} to deduce that $\vartheta_2,\tilde{\vartheta}_2$ are equivalent. We assume that $A$ is separable, hence \cite[Exercise 4.7.2]{DixmierC} tells us that $A$ is elementary.
\end{proof}

We also observe that separability is an important obstruction to a general characterization of algebras with one-point spectrum. Akemann and Weaver demonstrate in \cite{awCounterexample} that the existence of nonelementary \cs-algebras with one-point spectrum and $ \aleph_{1} $ generators is independent of ZFC.

\begin{corollary}\label{convmax}
Let $(\GG,A,\alpha)$ be a left dynamical system, where $\GG$ is a regular locally compact quantum group and $A$ a \cs-algebra which admits a faithful state. If $(\GG, A, \alpha)$ is subject to Stone--von Neumann uniqueness, then $ \alpha$ is a maximal action. 
\end{corollary}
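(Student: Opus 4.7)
The plan is to apply Theorem~\ref{prop1} to reduce the problem to a standard fact about $C^*$-algebras whose spectrum consists of a single point. By regularity of $\GG$, there is a canonical surjection
\[
\Phi\colon B \longrightarrow \bk(L^2(\GG))\otimes A, \qquad B := \widehat{\GG}^{\op}\ltimes(\GG\ltimes A),
\]
and maximality of $\alpha$ is precisely the assertion that $\Phi$ is injective, which is the goal I will pursue.

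First, I will verify the hypotheses of Theorem~\ref{prop1}: $\GG$ is a locally compact quantum group, $A$ admits a faithful state, and $(\GG, A, \alpha)$ is Stone--von Neumann unique. That theorem will then deliver the conclusion that the spectrum $\widehat{B}$ consists of a single unitary equivalence class of irreducible representations. Next, since $A$ admits a faithful state it is nonzero, so I may choose any irreducible representation $\pi$ of $A$; then $\id_{\bk(L^2(\GG))}\otimes \pi$ is irreducible on $\bk(L^2(\GG))\otimes A$, whence the composition $\tau := (\id \otimes \pi)\circ \Phi$ is an irreducible representation of $B$ satisfying $\ker\Phi\subseteq \ker\tau$.

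To conclude, I will observe that every irreducible representation $\sigma$ of $B$ is unitarily equivalent to $\tau$, hence satisfies $\ker\sigma = \ker\tau$. Because every $C^*$-algebra admits a faithful family of irreducible representations, i.e., $\bigcap_{\sigma\in\widehat{B}}\ker\sigma = \{0\}$, it follows that $\ker\tau = 0$; combined with $\ker\Phi\subseteq\ker\tau$, this yields $\ker\Phi = 0$ as desired. I do not anticipate any serious obstacle: the substantive work is absorbed into Theorem~\ref{prop1} (and behind it, Proposition~\ref{covcorr} and Theorem~\ref{main}), while the remaining step is the elementary observation that in a $C^*$-algebra with one-point spectrum every irreducible representation is automatically faithful.
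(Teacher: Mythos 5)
Your proof is correct and follows essentially the same route as the paper: both arguments reduce the claim to Theorem~\ref{prop1} and the fact that irreducible representations of a \cs-algebra separate points, differing only in that the paper argues by contradiction (producing two irreducibles with distinct kernels when $\ker\Phi\neq 0$) while you argue directly that the unique irreducible class must be faithful. The auxiliary facts you invoke (irreducibility of $\id\otimes\pi$, faithfulness of the atomic representation) are standard and correctly applied.
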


\begin{proof}
We have a canonical surjective $*$-homomorphism $\rho\colon \wh{\GG}^\op\ltimes (\GG\ltimes A)\rightarrow \bk (L^2(\GG))\otimes A$, and we want to show that this map is injective. Assume otherwise, i.e.~assume that $\ker(\rho)$ is a nontrivial ideal in  $\wh{\GG}^\op\ltimes (\GG\ltimes A)$. Then we can form the quotient $C^*$-algebra $(\wh{\GG}^\op\ltimes (\GG\ltimes A))/\ker(\rho)$ and choose its irreducible representation $\theta\colon (\wh{\GG}^\op\ltimes (\GG\ltimes A))/\ker(\rho)\rightarrow \bdops(H_\theta)$. Composing with the quotient map, we obtain an irreducible representation $\tilde{\theta}\colon \wh{\GG}^\op\ltimes (\GG\ltimes A)\rightarrow \bdops(H_\theta)$ such that $\ker(\rho)\subseteq \ker(\tilde{\theta})$. On the other hand, since $\ker(\rho)$ is nontrivial, we can choose $x\in\ker(\rho)\setminus\{0\}$ and an irreducible representation $\tilde{\theta}'\colon \wh{\GG}^\op\ltimes (\GG\ltimes A) \rightarrow \bdops(H_{\tilde{\theta}'})$ such that $\tilde{\theta}'(x)\neq 0$. This shows that $\tilde{\theta},\tilde{\theta}'$ have different kernels, in particular are not equivalent -- this gives us a contradiction with Theorem \ref{prop1}. Thus $\rho$ is an isomorphism, and the action $\alpha$ is maximal.
\end{proof}

\begin{remark}

Corollaries~\ref{convelem} and~\ref{convmax} indicate that the assumptions of Theorem~\ref{main}, namely elementariness of the coefficient algebra and maximality of the action, are not conditions designed to simplify arguments. On the contrary, they appear necessary. 
\end{remark}

As a final application, we prove the converse to Corollary~\ref{svnmthm} and show that Stone--von Neumann uniqueness of $(\mathbb{G},\mathbb{C},{\rm triv})$, where $\GG$ is regular, implies maximality of the trivial action of $\mathbb{G}$ on $\bc$.

\begin{corollary}\label{fullconverse}
Let $\GG$ be a regular locally compact quantum group. The following properties are equivalent:
\begin{enumerate}
\item $(\GG, \bc, \operatorname{triv})$ is Stone--von Neumann unique,
\item the trivial action of $\GG$ on $\bc$ is maximal,
\item $\GG$ is strongly regular.
\end{enumerate}
\end{corollary}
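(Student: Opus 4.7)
The proof is essentially a bookkeeping exercise that assembles results already proved in the paper; I will circle the three conditions through the implications $(2)\Rightarrow(1)\Rightarrow(2)$ and $(2)\Leftrightarrow(3)$.

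For $(2)\Rightarrow(1)$, the implication is immediate from Corollary~\ref{svnmthm}: maximality of the trivial action together with regularity of $\GG$ gives that every Heisenberg $(\GG,\bc,\operatorname{triv})$-representation is a multiple of the Schr\"odinger representation, which by definition is Stone--von Neumann uniqueness of $(\GG,\bc,\operatorname{triv})$.

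For $(1)\Rightarrow(2)$, the plan is to apply Corollary~\ref{convmax} to the coefficient algebra $A=\bc$. The hypothesis there requires a regular quantum group (given) and a coefficient algebra admitting a faithful state; for $A=\bc$ the identity $\operatorname{id}\colon\bc\to\bc$ is patently a faithful state, so Corollary~\ref{convmax} applies directly and yields maximality of $\operatorname{triv}$.

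For $(2)\Leftrightarrow(3)$, I would simply invoke Proposition~\ref{maxtrivdual}, whose conclusion (under the standing assumption of regularity) is precisely the equivalence between strong regularity of $\GG$ and maximality of the trivial action of $\GG$ on $\bc$.

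There is no real obstacle here; all the hard work has been absorbed into Corollaries~\ref{svnmthm} and~\ref{convmax} and Proposition~\ref{maxtrivdual}. The only thing to verify is that the hypotheses of those results are actually met by the dynamical system $(\GG,\bc,\operatorname{triv})$ with $\GG$ regular, and the only non-obvious check is that $\bc$ carries a faithful state, which is trivial. Thus the corollary is nothing more than a clean packaging of what has already been established.
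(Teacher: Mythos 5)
Your proposal is correct and follows exactly the paper's own argument: $(1)\Rightarrow(2)$ via Corollary~\ref{convmax} (noting $\bc$ has a faithful state), $(2)\Rightarrow(1)$ via Corollary~\ref{svnmthm}, and $(2)\Leftrightarrow(3)$ via Proposition~\ref{maxtrivdual}. Nothing further is needed.
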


\begin{proof}
If $(\GG,\bc,{\rm triv})$ is Stone--von Neumann unique, then {\rm triv} is maximal by Corollary~\ref{convmax}. The converse implication is the content of Corollary~\ref{svnmthm}. Equivalence of $(2)$ and $(3)$ was proved in Proposition \ref{maxtrivdual}.
\end{proof}

%\begin{corollary}\label{fullconverse}
%Let $\GG$ be a regular locally compact quantum group. Then $(\GG, \bc, \operatorname{triv})$ is Stone--von Neumann unique if and only if the trivial action of $\GG$ on $\bc$ is maximal. 
%\end{corollary}

%\begin{proof}
%If $(\GG,\bc,{\rm triv})$ is Stone--von Neumann unique, then {\rm triv} is maximal by Corollary~\ref{convmax}. The converse implication is the content of Corollary~\ref{svnmthm}.
%follows from Corollary~\ref{svnmthm} combined with Lemma~\ref{maxtrivdual}.
%\end{proof}

%\begin{corollary}
%Let $\GG$ be a regular locally compact quantum group. If $(\GG,\bc,{\rm triv})$ is Stone--von Neumann unique, then $\GG$ is strongly regular.
%\end{corollary}
%\begin{proof}
%First, note that Corollary~\ref{fullconverse} together with Proposition~\ref{maxtrivdual} gives us an isomorphism $\GG^\op \ltimes C_0^u(\GG)\cong  \bk (L^2(\GG))$. Let $j_{\GG^\op}, j_{C_0(\GG)}$ be the canonical maps associated with the crossed product $\GG^\op\ltimes C_0(\GG)$. Observe that $( j_{{\GG}^\op},  j_{C_0(\GG)}\circ\lambda)$ is a covariant representation of the system $(\GG^\op, C_0^u(\GG),$ $(\lambda\otimes \id) \circ\Delta^u_{\GG^\op})$ on $\GG^\op\ltimes C_0(\GG)$ whose integrated form determines a (nonzero) surjection of $\bk(L^2(\GG))$. Since $\bk(L^2(\GG))$ is simple, it has to be an isomorphism. We conclude that $\GG^\op\ltimes C_0(\GG^\op)$ comprises compact operators so that by definition, $\GG^\op$ is strongly regular. This property is equivalent to strong regularity of $\GG$, which ends the proof.
%\end{proof}

\begin{bibdiv}
\begin{biblist}

\bib{awCounterexample}{article}{
   author={Akemann, C.},
   author={Weaver, N.},
   title={Consistency of a counterexample to Naimark's problem},
   journal={Proc. Natl. Acad. Sci. USA},
   volume={101},
   date={2004},
   number={20},
   pages={7522--7525},
   issn={0027-8424},
   review={\MR{2057719}},
   doi={10.1073/pnas.0401489101},
}

\bib{BaajSkandalis}{article}{
    AUTHOR = {Baaj, S.},
     AUTHOR= {Skandalis, G.},
     TITLE = {Unitaires multiplicatifs et dualit\'{e} pour les produits crois\'{e}s
              de {$C^*$}-alg\`ebres},
   JOURNAL = {Ann. Sci. \'{E}cole Norm. Sup. (4)},
  FJOURNAL = {Annales Scientifiques de l'\'{E}cole Normale Sup\'{e}rieure. Quatri\`eme
              S\'{e}rie},
    VOLUME = {26},
      YEAR = {1993},
    NUMBER = {4},
     PAGES = {425--488},
      ISSN = {0012-9593},
   MRCLASS = {46L89 (22D25 46L05 81R50)},
  MRNUMBER = {1235438},
MRREVIEWER = {Palle E. T. Jorgensen},
       URL = {http://www.numdam.org/item?id=ASENS_1993_4_26_4_425_0},
}

\bib{BS}{article}{
  author  = {Bhat, B. V.},
  author  = {Skeide, M.},
  title   = {Pure Semigroups of Isometries on Hilbert $ C^{\ast} $-Modules},
  journal = {J. Funct. Anal.},
  volume  = {269},
  date    = {2015},
  number  = {5},
  pages   = {1539--1562},
  review  = {\MR{3369946}},
  doi     = {10.1016/j.jfa.2015.05.012},
}

\bib{buss}{article}{
  author  = {Buss, A.},
  title   = {Generalized Fixed Point Algebras for Coactions of Locally Compact Quantum Groups},
  journal = {M\"{u}nster J. Math.},
  volume  = {6},
  date    = {2013},
  number  = {2},
  pages   = {295--341},
  issn    = {1867-5778},
  review  = {\MR{3148214}},
}

\bib{CMS}{article}{
  author  = {Cavallaro, S.},
  author  = {Morchio, G.},
  author  = {Strocchi, F.},
  title   = {A Generalization of the Stone--von Neumann Theorem to Nonregular Representations of the CCR-Algebra},
  journal = {Lett. Math. Phys.},
  volume  = {47},
  date    = {1999},
  number  = {4},
  pages   = {307--320},
  review  = {\MR{1693735}},
  doi     = {10.1023/A:1007599222651},
}

\bib{DixmierC}{book}{
   author={Dixmier, J.},
   title={$C\sp*$-algebras},
   series={North-Holland Mathematical Library},
   volume={Vol. 15},
   note={Translated from the French by Francis Jellett},
   publisher={North-Holland Publishing Co., Amsterdam-New York-Oxford},
   date={1977},
   pages={xiii+492},
   isbn={0-7204-0762-1},
   review={\MR{0458185}},
}

\bib{modular}{article}{
  author  = {Hall, L.},
  author  = {Huang, L.},
  author  = {Quigg, J.},
  title   = {The Modular Stone--von Neumann Theorem},
  journal = {J. Operator Theory; accepted},
  note    = {arxiv:2109.08997},
}

\bib{hewitt}{article}{
   author={Hewitt, E.},
   title={The ranges of certain convolution operators},
   journal={Math. Scand.},
   volume={15},
   date={1964},
   pages={147--155},
   issn={0025-5521},
   review={\MR{0187016}},
   doi={10.7146/math.scand.a-10738},
}

\bib{huneufangruan}{article}{
    AUTHOR = {Hu, Z.}
    AUTHOR = {Neufang, M.}
    AUTHOR = {Ruan, Z.-J.},
     TITLE = {Convolution of trace class operators over locally compact quantum groups},
   JOURNAL = {Canad. J. Math.},
  FJOURNAL = {Canadian Journal of Mathematics. Journal Canadien de Math\'{e}matiques},
    VOLUME = {65},
      YEAR = {2013},
    NUMBER = {5},
     PAGES = {1043--1072},
      ISSN = {0008-414X},
       DOI = {10.4153/CJM-2012-030-5},
}

\bib{HI}{article}{
  author  = {Huang, L.},
  author  = {Ismert, L.},
  title   = {The Covariant Stone--von Neumann Theorem for Actions of Abelian Groups on $ C^{\ast} $-Algebras of Compact Operators},
  journal = {Comm. Math. Phys.},
  volume  = {378},
  date    = {2020},
  number  = {1},
  pages   = {117--147},
  review  = {\MR{4124983}},
  doi     = {10.1007/s00220-019-03664-5},
}

\bib{HKR}{article}{
  author  = {an Huef, A.},
  author  = {Kaliszewski, S.},
  author  = {Raeburn, I.},
  title   = {Covariant Representations of Hecke Algebras and Imprimitivity for Crossed Products by Homogeneous Spaces},
  journal = {J. Pure Appl. Algebra},
  volume  = {212},
  date    = {2008},
  number  = {10},
  pages   = {2344--2357},
  review  = {\MR{2426514}},
  doi     = {10.1016/j.jpaa.2008.03.011},
}

\bib{lance}{book}{
  author    = {Lance, E. C.},
  title     = {Hilbert $ C^{\ast} $-Modules},
  series    = {London Mathematical Society Lecture Note Series},
  volume    = {210},
  note      = {A Toolkit for Operator Algebraists},
  publisher = {Cambridge University Press, Cambridge},
  date      = {1995},
  pages     = {x + 130},
  isbn      = {0-521-47910-X},
  review    = {\MR{1325694}},
  doi       = {10.1017/CBO9780511526206},
}

\bib{kustermansuniversal}{article}{
  author  = {Kustermans, J.},
  title   = {Locally Compact Quantum Groups in the Universal Setting},
  journal = {Internat. J. Math.},
  volume  = {12},
  date    = {2001},
  number  = {3},
  pages   = {289--338},
  review  = {\MR{1841517}},
  doi     = {10.1142/S0129167X01000757},
}

\bib{kv}{article}{
  author   = {Kustermans, J.},
  author   = {Vaes, S.},
  title    = {Locally Compact Quantum Groups},
  language = {English, with English and French summaries},
  journal  = {Ann. Sci. \'{E}cole Norm. Sup. (4)},
  volume   = {33},
  date     = {2000},
  number   = {6},
  pages    = {837--934},
  review   = {\MR{1832993}},
  doi      = {10.1016/S0012-9593(00)01055-7},
}

\bib{KVvn}{article}{
  author  = {Kustermans, J.},
  author  = {Vaes, S.},
  title   = {Locally Compact Quantum Groups in the von Neumann Algebraic Setting},
  journal = {Math. Scand.},
  volume  = {92},
  date    = {2003},
  number  = {1},
  pages   = {68--92},
  review  = {\MR{1951446}},
  doi     = {10.7146/math.scand.a-14394},
}

\bib{Mackey}{article}{
  author  = {Mackey, G.},
  title   = {A Theorem of Stone and von Neumann},
  journal = {Duke Math. J.},
  volume  = {16},
  date    = {1949},
  number  = {2},
  pages   = {313--326},
  review  = {\MR{0030532}},
  doi     = {10.1215/S0012-7094-49-01631-2},
}

\bib{mrw}{article}{
   author={Meyer, R.},
   author={Roy, S.},
   author={Woronowicz, S. L.},
   title={Quantum group-twisted tensor products of C$^*$-algebras},
   journal={Internat. J. Math.},
   volume={25},
   date={2014},
   number={2},
   pages={1450019, 37},
   issn={0129-167X},
   review={\MR{3189775}},
   doi={10.1142/S0129167X14500190},
}

\bib{naimark}{article}{
   author={Naimark, M. A.},
   title={On a problem of the theory of rings with involution},
   language={Russian},
   journal={Uspehi Matem. Nauk (N.S.)},
   volume={6},
   date={1951},
   number={6(46)},
   pages={160--164},
   review={\MR{0046574}},
}

\bib{vN}{article}{
  author  = {von Neumann, J.},
  title   = {\"{U}ber einen Satz von Herrn M. H. Stone.},
  journal = {Ann. of Math.},
  volume  = {33},
  date    = {1932},
  number  = {3},
  pages   = {567--573},
  review  = {\MR{1503076}},
  doi     = {10.2307/1968535},
}

\bib{Palma}{book}{
  author    = {Palma, R.},
  title     = {Crossed Products by Hecke Pairs},
  series    = {Mem. Amer. Math. Soc.},
  volume    = {252},
  publisher = {American Mathematical Society, Providence, RI},
  date      = {2018},
  pages     = {vii + 141},
  isbn      = {978-1-4704-2809-9; 978-1-4704-4377-1},
  review    = {\MR{3778125}},
}

\bib{rw}{book}{
  author    = {Raeburn, I.},
  author    = {Williams, D. P.},
  title     = {Morita Equivalence and Continuous-Trace $ C^{\ast} $-Algebras},
  series    = {Mathematical Surveys and Monographs},
  volume    = {60},
  publisher = {American Mathematical Society, Providence, RI},
  date      = {1998},
  pages     = {xiv + 327},
  isbn      = {0-8218-0860-5},
  review    = {\MR{1634408}},
  doi       = {10.1090/surv/060},
}

\bib{Rieffel}{article}{
  author  = {Rieffel, M.},
  title   = {On the Uniqueness of the Heisenberg Commutation Relations},
  journal = {Duke Math. J.},
  volume  = {39},
  date    = {1972},
  number  = {4},
  pages   = {745--752},
  review  = {\MR{0412340}},
  doi     = {10.1215/S0012-7094-72-03982-8},
}

\bib{rinduced}{article}{
  author  = {Rieffel, M.},
  title   = {Induced Representations of \cs-algebras},
  journal = {Adv. Math.},
  volume  = {13},
  date    = {1974},
  number  = {2},
  pages   = {176-257},
  doi     = {10.1016/0001-8708-74-90068-1},
}

\bib{RT18}{article}{
    author = {Roy, S.},
    author = {Timmermann, T.},
    title = {The maximal quantum group-twisted tensor product of {$\rm C^*$}-algebras},
    journal = {J. Noncommut. Geom.},
    volume = {12},
    date = {2018},
    number = {1},
    pages = {279--330},
    review = {\MR{3782060}},
    doi = {10.4171/JNCG/277},
}

\bib{venchilada}{article}{
  author  = {Vaes, S.},
  title   = {A New Approach to Induction and Imprimitivity Results},
  journal = {J. Funct. Anal.},
  volume  = {229},
  date    = {2005},
  number  = {2},
  pages   = {317--374},
  review  = {\MR{2182592}},
  doi     = {10.1016/j.jfa.2004.11.016},
}

\bib{vergnioux}{article}{
	author = {Vergnioux, R.},
	title = {KK-th\'eorie \'equivariante et op\'erateur de Julg-Valette pour les groupes quantiques},
	journal = {Math\'ematiques [math]. Universit\'e Paris-Diderot - Paris VII}, 
	date={2002},
	note = {French},
}

\bib{wcrossed}{book}{
   author={Williams, D. P.},
   title={Crossed products of $C{^\ast}$-algebras},
   series={Mathematical Surveys and Monographs},
   volume={134},
   publisher={American Mathematical Society, Providence, RI},
   date={2007},
   pages={xvi+528},
   isbn={978-0-8218-4242-3},
   isbn={0-8218-4242-0},
   review={\MR{2288954}},
   doi={10.1090/surv/134},
}

\bib{raeburnwilliams}{book}{
    AUTHOR = {Raeburn, I.},
    AUTHORE = {Williams, D. P.}
     TITLE = {Morita equivalence and continuous-trace {$C^*$}-algebras},
    SERIES = {Mathematical Surveys and Monographs},
    VOLUME = {60},
 PUBLISHER = {American Mathematical Society, Providence, RI},
      date = {1998},
     PAGES = {xiv+327},
      ISBN = {0-8218-0860-5},
       DOI = {10.1090/surv/060},
}

\bib{Wor96}{article}{
    author = {Woronowicz, S. L.},
     title = {From multiplicative unitaries to quantum groups},
   journal = {Internat. J. Math.},
    volume = {7},
      date = {1996},
    number = {1},
     pages = {127--149},
  review = {MR1369908},
       doi = {10.1142/S0129167X96000086},
}

\end{biblist}
\end{bibdiv}

\end{document}